\newtheoremstyle{mystyle}
{11pt}                          
{11pt}                          
{}                                      
{}                                      
{\bfseries}                     
{}                                      
{5.5pt}                         
{}                                      
\theoremstyle{mystyle}
\newtheorem{theorem}{Theorem}[section]
\newtheorem{definition}[theorem]{Definition}
\newtheorem{lemma}[theorem]{Lemma}
\newtheorem{proposition}[theorem]{Proposition}
\newtheorem{corollary}[theorem]{Corollary}
\newtheorem{example}[theorem]{Example}
\renewenvironment{proof}[1][Proof.]{\vspace{-16.5pt} \begin{trivlist}
        \item[\hskip \labelsep {\bfseries #1}]}{\qed \end{trivlist}}
\appto\normalsize{
        \abovedisplayskip=5.5pt plus 2pt minus 2pt
        \belowdisplayskip=5.5pt plus 2pt minus 2pt
        \abovedisplayshortskip=5.5pt plus 2pt minus 2pt
        \belowdisplayshortskip=5.5pt plus 2pt minus 2pt}
\appto\small{
        \abovedisplayskip=5.5pt plus 2pt minus 2pt
        \belowdisplayskip=5.5pt plus 2pt minus 2pt
        \abovedisplayshortskip=5.5pt plus 2pt minus 2pt
        \belowdisplayshortskip=5.5pt plus 2pt minus 2pt}
\newcommand{\gap}{\vspace{11pt}}
\newcommand{\sgap}{\vspace{5pt}}
\newcommand{\tr}{\operatorname{tr}}
\newcommand{\R}{\mathcal{R}}
\newcommand{\LL}{\mbox{\rm LL}}
\newcommand{\Rn}{\mathcal{R}^n}
\newcommand{\Sn}{\mathcal{S}^n}
\newcommand{\Ln}{{\mathcal L}^n}
\newcommand{\Hn}{\mathcal{H}^n}
\newcommand{\V}{{\cal V}}
\title{\bf Completely mixed linear games and irreducibility concepts for $Z$-transformations over  self-dual cones}
\author{
        M. Seetharama Gowda\\
        Department of Mathematics and Statistics\\
        University of Maryland, Baltimore County\\
        Baltimore, Maryland 21250, USA\\
        gowda@umbc.edu
}
\date{May 7, 2024}
\begin{document}

\maketitle

\begin{abstract}
In the setting of a self-dual cone in a finite-dimensional inner product space, we consider  (zero-sum) linear games. In our previous work \cite{gowda-value, gowda-ravindran}, we 
showed that a $Z$-transformation with positive value is completely mixed. The present paper considers the case when the value is zero. Motivated by the matrix game result that a $Z$-matrix with value zero is
completely mixed if and only if it is irreducible, we formulate our general results based on the
concepts of cone-irreducibility and space-irreducibility.  While the concept of cone-irreducibility for a positive linear transformation is well-known, we introduce space-irreducibility for a general linear transformation by reformulating the irreducibility concept of Elsner \cite{elsner}. Our main result is that for a $Z$-transformation with value zero, space-irreducibility is necessary and sufficient for the completely mixed property. 
We also extend a recent result of Parthasarathy et al. \cite{parthasarathy et al} on matrix games with value zero to the setting of a symmetric cone (in a Euclidean Jordan algebra). Additionally,
we present a refined cone/space-irreducibility result for positive transformations on symmetric cones. 
\end{abstract}

\vspace{1cm}
\noindent{\bf Key Words:}  Value of a zero-sum linear game, completely mixed game, $Z$-transformation,  positive transformation, cone and space  irreducibility,
Euclidean Jordan algebra, symmetric cone
\\

\noindent{\bf MSC 2020:}  91A05, 15B99, 46N10, 17C20, 17C27.
\\

{\bf Dedication}: This paper is dedicated to the memory of Professor T. Parthasarathy who passed away on September 22, 2023.  A distinguished Indian mathematician, Professor Parthasarathy made substantial contributions to the areas of game theory and linear complementarity problems. 
\newpage

\section{Introduction}

In the classical (zero-sum) matrix game setting, Kaplansky's well-known result  (\cite{kaplansky-1}, Theorem 5) provides a necessary and sufficient condition for a matrix game with value zero to be completely mixed. About 50 years later, Kaplansky \cite{kaplansky-2} gave another such result for skew-symmetric matrices of odd order in terms of the so-called Pfaffians. Considering $Z$-matrices (which are square real matrices with nonpositive off-diagonal entries), Raghavan \cite{raghavan} showed that if the corresponding matrix game has positive value, then it is completely mixed and provided several characterizations of nonsingular $M$-matrices. 
In \cite{gowda-value, gowda-ravindran},  the concepts of value of a  matrix game and completely mixed property were
 generalized to the setting of a linear transformation over a self-dual cone in a finite dimensional real inner product space;  for related work on symmetric games and proper cones, see \cite{gokulraj-chandrashekaran-1, gokulraj-chandrashekaran-2, orlitzky}.
 In \cite{gowda-value, gowda-ravindran} , the results of Kaplansky \cite{kaplansky-1} and Raghavan \cite{raghavan} were generalized and, in particular, it was shown that for a $Z$-transformation (which is a generalization of a $Z$-matrix), positive value implies the completely mixed property. In the present paper, we focus on $Z$-transformations with value zero and provide a necessary and sufficient condition for the completely mixed property in terms of an irreducibility condition.
\\
To elaborate, we start with a brief description of relevant game-theoretic ideas.
Let $(\V, \langle \cdot,\cdot\rangle)$  be a finite dimensional real inner product
space. Consider a {\it self-dual cone} $K$ in $\V$, so $K=K^*:=\{x\in \V: \langle x,y\rangle \geq 0,\,\,\forall\,y\in K\}.$ We fix an element $e$ in $K^\circ$ (the interior
of $K$) and let
$$\Delta(e) := \{x\in K : \langle x, e\rangle = 1\},$$
the elements of which will be called ‘strategies’. Given a linear transformation $L$ on $\V$, the zero-sum linear game — denoted by $(L, e)$ — is played by two players I and
II in the following way: If player I chooses strategy $x\in  \Delta(e)$ and player II chooses
strategy $y\in \Delta(e)$, then the pay-off for player I is $\langle L(x), y\rangle$ and the pay-off for player II
is $-\langle L(x), y\rangle$. Both players try to maximize their pay-offs. Since $\Delta(e)$ is a compact convex
set and $L$ is linear, by the min–max Theorem of von Neumann (\cite{karlin}, Theorems 1.5.1
and 1.3.1), there exist  strategies $\overline{x}$ (for player I) and $\overline{y}$ (for player II) satisfying the inequalities
$$\langle L(x), \overline{y}\rangle\leq \langle L(\overline{x}), \overline{y}\rangle\leq \langle L(\overline{x}), y\rangle\quad\mbox{for all}\,\, x, y\in \Delta(e).$$
 (So the players I and II do not gain by unilaterally changing their strategies
from $\overline{x}$ and $\overline{y}$.) The number
$$v(L, e) := \langle L(\overline{x}), \overline{y}\rangle$$
is called the {\it value of the game} and  the pair $(\overline{x},\overline{y})$ is called an {\it optimal
strategy pair}. This value is also given by (\cite{karlin}, Theorems 1.5.1 and 1.3.1)
$$v(L, e) = \max_{ x\in \Delta(e)}\min_{y\in \Delta(e)}\langle L(x), y\rangle = \min_{y \in \Delta(e)} \max_{x\in\Delta(e)}\langle L(x), y\rangle.$$
{\it We say that the game $(L,e)$ is completely mixed if for every optimal strategy pair $(\overline{x},\overline{y})$, both $\overline{x}$ and $\overline{y}$ belong to $ K^\circ$. (As we see below, in this case, there is a unique optimal strategy pair.)}\\

We note that with $\V=\Rn$ (under the usual inner product), $K=\Rn_+$ (nonnegative orthant), and $e$ denoting the vector of ones, we get back to the classical {\it matrix-game setting}. 
 \\

  For ease of reference, we recall some results from \cite{gowda-value, gowda-ravindran}; here we use the notation  $x\geq 0$ $(x>0, \,x\leq 0,\,x\leq y)$ to mean $x\in K$ (respectively, $x\in K^\circ,\, -x\geq 0,\,x-y\leq 0)$ and write $L^T$ for the transpose of $L$.\\

\begin{theorem}\label{basic results} \cite{gowda-value, gowda-ravindran}
{\it  For the linear game $(L,e)$, the following statements hold:
\begin{itemize}
\item [$(1)$]  If $(x,y)$ is an optimal strategy pair, then $L^T (y)\leq  v(L,e)\, e\leq  L(x)$. Conversely, if there exist  $x,y\in \Delta(e)$ and a real number $v$ such that $L^T (y)\leq  v\, e\leq  L(x)$,
then $v = v(L, e)$ and $(x, y)$ is an optimal strategy pair for $(L, e)$; additionally, if
there exists an optimal strategy pair $(\overline{x},\overline{y})$ with $\overline{x},\overline{y}>0$, then $L^T (y)=  v(L,e)\, e=  L(x)$.
\item [$(2)$] If $(x, y)$ is an optimal strategy pair for $(L, e)$, then $(y, x)$ is an optimal strategy pair for $(-L^T, e)$; moreover, $v(-L^T,e)=-v(L,e)$.
\item [$(3)$] $v(L,e)>0$ if and only if there exists $u>0$ such that $L(u)>0$.
\item [$(4)$] The sign of $v(L,e^\prime)$ is a constant as $e^\prime$ varies over $K^\circ$.
\item [$(5)$]  If for every optimal strategy pair $(x,y)$ of $(L, e)$, $y>0$ $($equivalently, $x>0$ for every optimal strategy pair $(x,y))$, then $(L,e)$  is completely mixed; in this case,  there is a unique  optimal strategy pair.
\item [$(6)$] When $v(L, e)= 0$, $(L,e)$ is completely mixed if and only if $dim(ker(L))=1$ and there exist  $\overline{x},\,\overline{y}>0$ such that for the transformation S := $\overline{x}\, \overline{y}^T$
$($which takes any $x$ to $\langle \overline{y},x\rangle\,\overline{x})$, we have $LS = SL = 0$. 
\item [$(7)$] When $(L, e)$ is completely mixed, $v(L, e)\neq  0$ if and only if $L$ is invertible.
\item [$(8)$] If $(L,e)$ has value zero and is completely mixed, then for every $e^\prime\in K^\circ$, the game $(L,e^\prime)$ has value zero and is completely mixed.

\end{itemize}
}
\end{theorem}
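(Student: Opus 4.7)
The plan is to apply item (6) as an iff and to bridge the two games through a one-line renormalization. First, from the hypothesis that $(L, e)$ has value zero and is completely mixed, item (6) would yield $\dim(\ker L) = 1$ together with elements $\overline{x}, \overline{y} \in K^\circ$ such that $LS = SL = 0$ for $S := \overline{x}\,\overline{y}^T$. Unfolding $LS = 0$ at any $u$ with $\langle \overline{y}, u\rangle \neq 0$ forces $L(\overline{x}) = 0$; likewise $SL = 0$ gives $L^T(\overline{y}) = 0$. These two kernel facts are intrinsic to $L$ and do not mention $e$.

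Next, for an arbitrary $e' \in K^\circ$, I would rescale and set
$$\overline{x}' := \frac{\overline{x}}{\langle \overline{x}, e'\rangle}, \qquad \overline{y}' := \frac{\overline{y}}{\langle \overline{y}, e'\rangle}.$$
Since $\overline{x}, \overline{y} \in K^\circ$ and $e' \in K = K^*$ is nonzero, both denominators are strictly positive, so $\overline{x}', \overline{y}' \in K^\circ \cap \Delta(e')$, and they remain in the respective kernels: $L(\overline{x}') = 0$ and $L^T(\overline{y}') = 0$. The trivial chain
$$L^T(\overline{y}') = 0 \leq 0 \cdot e' \leq 0 = L(\overline{x}')$$
would then let me invoke the converse part of item (1) with $v = 0$, giving $v(L, e') = 0$ and identifying $(\overline{x}', \overline{y}')$ as an optimal strategy pair for $(L, e')$.

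To close, I would form $S' := \overline{x}'\,\overline{y}'^T$, which is a positive scalar multiple of $S$, so $LS' = S'L = 0$ is immediate. Since $\dim(\ker L) = 1$ is an intrinsic property of $L$ and $\overline{x}', \overline{y}' \in K^\circ$, the ``if'' direction of item (6), applied to $(L, e')$ with the just-established $v(L, e') = 0$, would deliver that $(L, e')$ is completely mixed. I do not foresee a genuine obstacle here: the entire argument is careful renormalization, resting on the observation that the kernel-based characterization in item (6) is intrinsic to $L$ and transfers between any two interior reference points via a trivial rescaling.
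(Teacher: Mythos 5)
Your argument for item $(8)$ is correct, and it is the natural derivation: from the ``only if'' direction of item $(6)$ you correctly extract the intrinsic facts $\dim(\ker L)=1$, $L(\overline{x})=0$ and $L^{T}(\overline{y})=0$ with $\overline{x},\overline{y}\in K^{\circ}$ (the unfolding of $LS=0$ and $SL=0$ is fine); the rescaling $\overline{x}'=\overline{x}/\langle\overline{x},e'\rangle$, $\overline{y}'=\overline{y}/\langle\overline{y},e'\rangle$ lands in $\Delta(e')\cap K^{\circ}$ because $e'\in K^{\circ}$ and $K$ is self-dual; the chain $L^{T}(\overline{y}')\leq 0\cdot e'\leq L(\overline{x}')$ together with the converse part of item $(1)$, applied with reference point $e'$, legitimately yields $v(L,e')=0$ and optimality of $(\overline{x}',\overline{y}')$; and re-invoking the ``if'' direction of item $(6)$ for the game $(L,e')$ then gives the completely mixed property. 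There is no circularity, since items $(1)$ and $(6)$ do not depend on $(8)$.

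The gap is one of coverage, not of logic. The statement is the full eight-item theorem, which the paper itself only recalls from \cite{gowda-value, gowda-ravindran} without proof, and your proposal establishes only item $(8)$ while using items $(1)$ and $(6)$ as black boxes. Those two items carry the real mathematical content here: item $(1)$ encodes the von Neumann min--max/optimality characterization (and the sharpening when an interior optimal pair exists), and item $(6)$ is the Kaplansky-type characterization of completely mixed games with value zero; neither is proved or even sketched in your write-up, nor are items $(2)$--$(5)$ and $(7)$. So as a proof of the quoted theorem the proposal is incomplete; as a derivation of item $(8)$ from the earlier items it is sound and matches the intended route.
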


Motivated by matrix theory and dynamical systems considerations, some special types of linear transformations were studied in \cite{gowda-value, gowda-ravindran}.
We recall that a  linear transformation $L$ on $\V$ (relative to a proper cone $K$  with dual $K^*$) is said to be  a
{\it 
\begin{itemize}
\item [$(a)$]  $Z$-transformation  if $\big [x\in K, y\in K^*, \langle x, y\rangle = 0\big ]\Rightarrow  \langle L(x), y\rangle\leq  0$,
\item [$(b)$] Lyapunov-like transformation if $\big [x\in K, y\in K^*, \langle x, y\rangle = 0\big ]\Rightarrow  \langle L(x), y\rangle= 0$, 
\item [$(c)$]  Stein-like transformation if $L = I-S$, where $S\in \overline{Aut(K)}$ with $Aut(K)$ denoting the set of all bijective linear transformations from $K$ to $K$, and
\item [$(d)$] positive transformation if $L(K)\subseteq K$.
\end{itemize}
}
 
 We note that in the setting of $\V=\R^n$ with $K=\R^n_+$, a $Z$-transformation is just a $Z$-matrix. Also, in the setting of 
$\V=\Sn$ (the space of all $n\times n$ real symmetric matrices) with $K=\Sn_+$ (the cone of all positive semidefinite matrices in $\Sn$), corresponding to $A\in \R^{n\times n}$, 
the transformations defined by $L_A(X):=AX+XA^T$ and $S_A(X):=X-AXA^T$ are, respectively, examples of Lyapunov-like and Stein-like transformations. As is well-known, these two transformations, respectively called Lyapunov and Stein transformations, appear in the study of continuous and discrete dynamical systems. 

\gap

We recall the following results from \cite{gowda-ravindran}; note that in these results, $K$ is assumed to be self-dual.

\begin{theorem} \label{GR- nonzero value implies cm}
{\it
The game $(L,e)$ is completely mixed in each of the following instances:
 \begin{itemize}
\item [$(1)$]  $L$ is a $Z$-transformation with $v(L,e)>0$.
\item [$(2)$]  $L$ is Lyapunov-like with $v(L,e)\neq 0$.
\item [$(3)$]  $L$ is Stein-like with $v(L,e)\neq 0$.
\end{itemize}
}
\end{theorem}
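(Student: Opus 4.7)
The plan is to reduce each case to the following criterion: by Theorem~\ref{basic results}(5), it suffices to prove that every optimal strategy $\bar x$ of player I lies in $K^\circ$. Theorem~\ref{basic results}(1) then supplies the key inequalities $L(\bar x)\geq v(L,e)\,e$ and $L^T(\bar y)\leq v(L,e)\,e$ that drive each argument.

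For case (1), fix an optimal pair $(\bar x,\bar y)$ with $v=v(L,e)>0$. Since $ve\in K^\circ$ and $L(\bar x)-ve\in K$, we get $L(\bar x)\in K^\circ$. If $\bar x\notin K^\circ$, self-duality produces a nonzero $u\in K$ with $\langle\bar x,u\rangle=0$; the $Z$-property yields $\langle L(\bar x),u\rangle\leq 0$, which contradicts $L(\bar x)\in K^\circ$ together with $u\in K\setminus\{0\}$. Hence $\bar x\in K^\circ$ and Theorem~\ref{basic results}(5) concludes.

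For case (2), recall that every Lyapunov-like transformation is, in particular, a $Z$-transformation; moreover, on a self-dual cone the Lyapunov-like class is closed under both transposition (swapping the roles of $x$ and $y$ in the defining implication) and negation, so $-L^T$ is again Lyapunov-like. If $v(L,e)>0$, apply case (1) directly. If $v(L,e)<0$, apply case (1) to $-L^T$, whose value is $-v(L,e)>0$ by Theorem~\ref{basic results}(2); the completely-mixed property then transfers back to $(L,e)$ via the optimal-pair correspondence of Theorem~\ref{basic results}(2).

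For case (3), first observe that a Stein-like $L=I-S$ with $S\in\overline{\Aut(K)}$ is automatically a $Z$-transformation: for $x,y\in K$ with $\langle x,y\rangle=0$ we have $\langle L(x),y\rangle=-\langle S(x),y\rangle\leq 0$, since $S(x),y\in K$. Thus case (1) disposes of the subcase $v(L,e)>0$ (alternatively, one can argue directly from $\bar x = L(\bar x)+S(\bar x)\geq v(L,e)\,e+S(\bar x)\in K^\circ$). When $v(L,e)<0$, the shortcut through $-L^T$ used in case (2) is unavailable, because $-L^T=S^T-I$ is generally \emph{not} a $Z$-transformation, so a direct argument is required. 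Rewriting $L^T(\bar y)\leq v(L,e)\,e$ as $S^T(\bar y)\geq \bar y-v(L,e)\,e\in K^\circ$ shows $S^T(\bar y)\in K^\circ$; the main technical obstacle is then to deduce $\bar y\in K^\circ$ from this, given only that $S^T\in\overline{\Aut(K)}$ and may fail to be invertible. This is handled by approximating $S$ by elements of $\Aut(K)$, for which $(S^T)^{-1}$ exists and preserves $K^\circ$, and passing to the limit while using Theorem~\ref{basic results}(4) to keep the sign of the value stable under the perturbation.
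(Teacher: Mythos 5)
Your arguments for case (1), case (2), and the positive-value half of case (3) are correct and are essentially the standard ones: positive value forces $L(\bar x)\in K^\circ$, the $Z$-property then rules out a boundary $\bar x$ via a nonzero $u\in K$ orthogonal to $\bar x$, and the Lyapunov-like case with negative value passes correctly to $-L^T$ (which is again Lyapunov-like on a self-dual cone) through items (2) and (5) of Theorem \ref{basic results}. (The paper itself only quotes this theorem from \cite{gowda-ravindran}, so there is no in-text proof to compare against.)

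The gap is in case (3) with $v(L,e)<0$, which is precisely the crux you identify but then only sketch. Your reduction $S^T(\bar y)\geq \bar y-v(L,e)\,e\in K^\circ$ is fine, but the proposed mechanism for concluding $\bar y\in K^\circ$ --- ``approximate $S$ by elements of $\Aut(K)$ \ldots and pass to the limit while using Theorem \ref{basic results}(4) to keep the sign of the value stable'' --- does not work as stated. Theorem \ref{basic results}(4) concerns varying $e$, not perturbing $L$; more importantly, replacing $S$ by a nearby automorphism changes the game, the fixed optimal pair $(\bar x,\bar y)$ of $(L,e)$ need not be optimal for the perturbed game, and even if the perturbed games are completely mixed, limits of their (interior) optimal strategies can land on $\partial K$, so no conclusion about $\bar y$ follows. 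The repair is to keep $\bar y$ fixed and perturb only the operator acting on it: writing $S=\lim_k A_k$ with $A_k\in\Aut(K)$, openness of $K^\circ$ and $A_k^T(\bar y)\to S^T(\bar y)\in K^\circ$ give $A_k^T(\bar y)\in K^\circ$ for large $k$, whence $\bar y=(A_k^T)^{-1}\bigl(A_k^T(\bar y)\bigr)\in K^\circ$ because $(A_k^T)^{-1}\in\Aut(K)$ preserves $K^\circ$ (here self-duality gives $A^T\in\Aut(K)$ whenever $A\in\Aut(K)$). Equivalently, one can first prove the lemma that every $T\in\overline{\Aut(K)}$ maps $\partial K$ into $\partial K$, by transporting a witness $0\neq u\in K$ with $\langle y,u\rangle=0$ through $(A_k^T)^{-1}$, normalizing, and passing to a convergent subsequence. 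With either of these replacements, and an appeal to Theorem \ref{basic results}(5) applied to the second coordinate, your case (3) is complete; no control on the value of the perturbed games is needed anywhere.
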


What happens if the value is zero? Specifically, we ask:
\begin{center}
{\it For a $Z$-transformation with value zero, when is the game completely mixed?}
\end{center}
In the matrix game setting, we have the result (see Theorem \ref{z-matrix result} below) that a {\it $Z$-matrix with value zero is completely mixed if and only if it is irreducible.}
Motivated by this, we formulate our results based on the irreducibility concept(s). 
In the setting of a proper cone $K$, there is the known concept of $K$-irreducibility (to be called cone-irreducibility in this paper) for a positive 
 transformation \cite{berman-plemmons}: The only faces of $K$ that are invariant under the transformation are $\{0\}$ and $K$. 
For a general linear transformation over a finite dimensional real inner product space (still relative to a proper cone), we define space-irreducibility 
 by stipulating that the transformation leaves no non-trivial subspace of the form $F-F$ (where $F$ is a face of $K$)   invariant, see Definition \ref{irreducibility defn over a proper cone} below. 
 It turns out that this space-irreducibility concept is equivalent to the irreducibility concept introduced by Elsner \cite{elsner}. 
Then, based on the exponential characterization of a cross-positive transformation (which is the negative of a $Z$-transformation) of Schneider and Vidyasagar \cite{schneider-vidyasagar} and the  irreducibility result of Elsner \cite{elsner}, we prove our main result (in Theorem \ref{space-irreducibility equals cm}):

 \begin{itemize}
 \item {\it  Over a self-dual cone,  space-irreducibility is necessary and sufficient for a $Z$-transformation with value zero to be completely mixed.}
 \end{itemize}

By way of an application, see Corollary \ref{generalization of encinas et al result}, we show that a space-irreducible $Z$-transformation with value zero has `almost monotone', `trivially range monotone,'  and group inverse properties; these latter properties were studied recently by Encinas, Mondal, and Sivakumar \cite{encinas et al} for Lyapunov and Stein
 transformations over the space of all  $n\times n$ real symmetric matrices.

\sgap

Next, in Section 4 we describe several results and examples in the setting of symmetric cones (which are self-dual and homogeneous) in  Euclidean Jordan algebras. In particular, we 
\begin{itemize}
{\it \item  generalize the following result of 
 Parthasarathy, Ravindran, and Sunil Kumar  \cite{parthasarathy et al} on matrix games:
{\it For a real $n\times n$ matrix $A$ with $v(A)=0$,  $A$ is completely mixed if and only if $v(A+D_i)>0$ for all $i=1,2,\ldots, n$, where $D_i$ denotes the diagonal matrix with $1$ in the $(i,i)$ slot and zeros elsewhere},
\item provide a finer characterization of cone-irreducibility of a positive transformation $T$: For each $0\neq x\geq 0$,  $(I+T)^{n-1}x>0$, where $n$ denotes the rank of $\V$, 
\item characterize the completely mixed property of a symmetric/skew-symmetric Lyapunov-like transformation, and
\item provide a necessary and sufficient condition for the space-irreducibility of a Lyapunov transformation $L_A$ on $\Sn$.
}
\end{itemize}

While the focus of the paper is on  game-theoretic ideas/results, one may consider studying $Z$-transformations with value zero for their dynamical systems and complementarity properties.
In the case of a $Z$-transformation with value positive (equivalently, positive stable, see Theorem \ref{positive stability of z}),  the {\it  dynamical system}
$$\frac{dx}{dt}+L(x)=0$$
is (globally) asymptotically stable, that is, starting from any initial point, the trajectory of the above system converges to zero as (time) $t\rightarrow \infty$. The question of what happens when the value is zero is left for a future study.
\\
Given a linear transformation $L$, a proper cone $K$, and a $q\in \V$, the {\it linear complementarity problem} LCP$(L,K,q)$ is to find $x\in \V$ such that 
$$x\in K,\,\,L(x)+q\in K^*,\,\mbox{and}\,\,\langle x,y\rangle =0.$$
When $L$ is a $Z$-transformation (on a self-dual cone $K$)  with value positive, this problem has a solution for all $q\in \V$ \cite{gowda-tao-z}; in the setting of $\V=\Rn$ and $K=\Rn_+$, this can be described in more than 50 equivalent ways, see \cite{berman-plemmons}. What happens to such properties when the value is zero? Again, we relegate this to a future study.

\section{Preliminaries}
Throughout this paper, {\it $\V$ denotes a finite dimensional real inner product space and $K$ is a proper cone in $\V$ (that is, $K$ is a pointed closed convex cone with nonempty interior).  In all game-theoretic settings, we assume that $K$ is self-dual. In particular, when $\V$ is a Euclidean Jordan algebra, we assume that $K$ is the corresponding symmetric cone.}  The interior, closure,  boundary, and span of a set $X$ in $\V$ are denoted, respectively, by  $X^\circ$, $\overline{X}$,  $\partial(X)$, and $\rm{span}(X)$. For a linear transformation $L$, $ker(L)$ denotes its kernel (null space). Recall that a convex cone $F$ within (the proper cone) $K$ is a {\it  face} if an element $a$ in $F$ is of the form $b+c$  with $b,c\in K$, then $b,c\in F$. Since $K$ is proper, every face of $K$ is a pointed closed convex cone in $\V$.  If $F$ is a face of $K$, then the $span(F): =F-F$ is the subspace generated by $F$. For various definitions and properties regarding cones, proper cones, faces, etc., we refer to \cite{berman-plemmons}.\\
Throughout, we use the following notation: Relative to the given proper cone $K$ in $\V$, we write
$$x\geq 0 \,\,\mbox{when}\,\,x\in K\,\,\mbox{and}\,\,x>0\,\,\mbox{when}\,\,x\in K^\circ\,(=\mbox{interior of}\,K).$$
With  $K^*$ denoting the dual of $K$, we  frequently use the well-known fact
\begin{equation}\label{well-known fact about dual}
\big [ x\in K^\circ, 0\neq y\in K^*\big ] \Rightarrow \langle y,x\rangle >0.
\end{equation}

\gap

We freely use the game-theoretic concepts/results formulated in the Introduction. Throughout this paper, we will be focusing on describing the completely mixed property for linear transformations with value zero
relative to some $e\in K^\circ$. In view of Item $(8)$ in Theorem \ref{basic results}, the element $e$ can be replaced by any other $e^\prime\in K^\circ$. So, 
\begin{center}
{\it when $v(L,e)=0$ and $(L,e)$ is completely mixed, we suppress $e$ and \\write $v(L)=0$ and say that $L$ is completely mixed.}
\end{center}
\subsection{Z-transformations}
For the given proper cone $K$ in $\V$, we consider the following sets of linear transformations:
$$
{\it  \begin{array}{l}
 Z(K):=\mbox{Set of all Z-transformations on}\, K,\\
\LL(K):=\mbox{Set of all Lyapunov-like transformations on}\, K,\\ 
\pi(K):=\mbox{Set of all positive transformations on K}.
\end{array}
}
$$

When $K$ is the nonnegative orthant in $\Rn$, these become, respectively, the set of $Z$-matrices, diagonal matrices, and nonnegative matrices. 

\gap

We recall the following results of Schneider and
Vidyasagar \cite{schneider-vidyasagar}. In this reference, the authors deal with a `cross-positive matrix'  which is the negative of a $Z$-transformation.

\begin{theorem}  (\cite{schneider-vidyasagar}, Theorems  3 and 6)\label{schneider-vidyasagar}
{\it 
\begin{equation}\label{schneider-vidyasagar result}
L\in Z(K)\Leftrightarrow exp(-tL)\in \pi(K)\,\,\mbox{for all}\,\, t\geq  0,
\end{equation}
where  $exp(L):=\sum_{k=0}^{\infty}\frac{L^k}{k!}$.
Moreover, When $L\in Z(K)$, 
$$\alpha:=\min\,\{Re(\lambda): \lambda\,\,\mbox{is an eigenvalue of}\,\,L\}$$ 
is an eigenvalue of $L$ with an associated eigenvector $u\in K$. 
}
\end{theorem}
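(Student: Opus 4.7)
The plan is to prove the two parts separately: first establish the equivalence $L \in Z(K) \Leftrightarrow \exp(-tL) \in \pi(K)$ for all $t \geq 0$, then deduce the Perron--Frobenius-type statement by applying the Krein--Rutman theorem to the positive semigroup $\{\exp(-tL)\}_{t \geq 0}$.

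For the $(\Leftarrow)$ direction I would differentiate at $t=0$. Fix $x \in K$ and $y \in K^*$ with $\langle x, y\rangle = 0$ and set $\varphi(t) := \langle \exp(-tL)x, y\rangle$; the hypothesis yields $\varphi(t) \geq 0$ with $\varphi(0) = 0$, so $\varphi'(0^+) \geq 0$, and term-by-term differentiation of the defining power series gives $\varphi'(0) = -\langle L(x), y\rangle$, forcing $\langle L(x), y\rangle \leq 0$ as required.

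For the harder direction $(\Rightarrow)$, I would view $\{\exp(-tL)\}_{t \geq 0}$ as the flow of the linear ODE $\dot z = -Lz$ and invoke Nagumo's invariance theorem: the closed convex set $K$ is positively invariant under this flow if and only if $-Lz \in T_K(z)$ at every boundary point $z \in \partial K$, where $T_K(z)$ denotes the tangent cone. A standard convex-analysis computation (bipolar theorem plus the cone structure) identifies $T_K(z) = \{v \in \V : \langle v, y\rangle \geq 0 \text{ for all } y \in K^* \text{ with } \langle z, y\rangle = 0\}$, so the invariance condition reads $\langle L(z), y\rangle \leq 0$ whenever $z \in K$, $y \in K^*$, and $\langle z, y\rangle = 0$ --- precisely the definition of $L \in Z(K)$. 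A more algebraic alternative would be to produce, by a compactness argument on $\Delta(e) \times \Delta^*(e^*)$, a real number $\lambda$ with $P := \lambda I - L \in \pi(K)$ and then use $\exp(-tL) = e^{-t\lambda}\sum_{k \geq 0}\frac{t^k P^k}{k!} \in \pi(K)$ directly; the delicate step in either route is the boundary behavior at pairs $(z,y)$ with $\langle z, y\rangle = 0$, which is exactly what the $Z$-property controls.

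For the spectral statement, fix $t > 0$ and apply Krein--Rutman to $\exp(-tL) \in \pi(K)$: its spectral radius is an eigenvalue with an associated eigenvector $u \in K$. The spectrum of $\exp(-tL)$ equals $\{e^{-t\mu} : \mu \in \operatorname{spec}(L)\}$, so the radius equals $\max_\mu e^{-t\operatorname{Re}(\mu)} = e^{-t\alpha}$. To transfer the Krein--Rutman eigenvector $u$ into an eigenvector of $L$, I would choose $t > 0$ small enough that $t|\operatorname{Im}(\mu)| < 2\pi$ for every eigenvalue $\mu$ of $L$ (possible since there are only finitely many); then $e^{-t\mu} = e^{-t\alpha}$ forces $\mu = \alpha$, showing $\alpha \in \operatorname{spec}(L)$ and that $u$ lies in the generalized $\alpha$-eigenspace of $L$. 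Writing $L = \alpha I + N$ on this subspace with $N$ nilpotent, the identity $\exp(-tL)u = e^{-t\alpha}u$ reduces to $\exp(-tN)u = u$, and since $\exp(-tN) - I$ shares its kernel with $N$, this yields $Nu = 0$, i.e., $Lu = \alpha u$ with $u \in K$.
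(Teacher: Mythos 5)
This theorem is quoted by the paper from Schneider and Vidyasagar (their Theorems 3 and 6); the paper supplies no proof of it, so there is no internal argument to compare yours against. Your main line is correct and is essentially the standard route to this result. The $(\Leftarrow)$ direction by differentiating $\varphi(t)=\langle \exp(-tL)x,y\rangle$ at $t=0$ is fine. For $(\Rightarrow)$, the Nagumo subtangentiality argument works: for a closed convex cone one indeed has $T_K(z)=\{v:\langle v,y\rangle\ge 0 \mbox{ for all } y\in K^*\mbox{ with }\langle z,y\rangle=0\}$ (the polar of the normal cone $N_K(z)=\{-y:\ y\in K^*,\ \langle y,z\rangle=0\}$), and the $Z$-property says precisely that $-L(z)$ is subtangential to $K$ at every $z\in K$, so the flow of $\dot z=-Lz$ leaves $K$ invariant, i.e., $\exp(-tL)\in\pi(K)$ for all $t\ge 0$. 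The spectral part is also sound: Krein--Rutman applied to $\exp(-tL)$ gives an eigenvector $u\in K$ for the spectral radius $e^{-t\alpha}$; choosing $t$ with $t\abs{\operatorname{Im}\mu}<2\pi$ for all eigenvalues $\mu$ forces $e^{-t\mu}=e^{-t\alpha}\Rightarrow\mu=\alpha$, which both puts $\alpha$ in the spectrum of $L$ and confines $u$ to the generalized $\alpha$-eigenspace, and the factorization $\exp(-tN)-I=-tN\cdot(\mbox{invertible factor commuting with }N)$ then yields $Nu=0$, hence $L(u)=\alpha u$ with $0\neq u\in K$.

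One caveat: your proposed ``more algebraic alternative'' for $(\Rightarrow)$ --- producing $\lambda\in\R$ with $\lambda I-L\in\pi(K)$ --- does not work over a general proper cone. Only $Z(K)=\overline{\R I-\pi(K)}$ holds; the inclusion $\R I-\pi(K)\subseteq Z(K)$ is strict in general, and the paper itself exhibits a $Z$-transformation (the skew-symmetric Lyapunov-like $L_A$ on $\mathcal{S}^2$) that is not of the form $rI-S$ with $S\in\pi(K)$. So no compactness argument can furnish such a $\lambda$; that route is available only in special cases such as $K=\Rn_+$. Since you offer it merely as an alternative, this does not affect the validity of your main proof.
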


We mention one interesting consequence:
\begin{proposition}
{\it Suppose $K$ is self-dual, $e\in K^\circ$, and  $L\in Z(K)$. If $(L,e)$ is completely mixed, then $v(L,e)$ and $\alpha$ have the same sign; in particular, when $v(L,e)$ is zero, we have $\alpha=0$ and so $L$ is nonnegative stable.}
\end{proposition}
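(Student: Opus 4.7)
The plan is to extract an eigenvector of $L^T$ lying in $K$ (using the self-duality of $K$ together with Theorem \ref{schneider-vidyasagar}), and pair it against the interior optimal strategy $\bar{x}$. Since $K=K^*$, the defining implication for $Z(K)$ is symmetric in the roles of $K$ and $K^*$, which gives $L^T\in Z(K)$ directly. Applying Theorem \ref{schneider-vidyasagar} to $L^T$ — whose spectrum equals that of $L$ — produces a vector $u'\in K\setminus\{0\}$ with $L^T(u')=\alpha\,u'$.

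Next, I invoke the completely mixed hypothesis. By Item $(1)$ of Theorem \ref{basic results}, the existence of an optimal strategy pair $(\bar{x},\bar{y})$ with $\bar{x},\bar{y}>0$ forces the equalities $L(\bar{x})=v(L,e)\,e$ and $L^T(\bar{y})=v(L,e)\,e$. Computing $\langle L(\bar{x}),u'\rangle$ in two ways then gives
\[
\alpha\,\langle \bar{x},u'\rangle \;=\; \langle \bar{x},L^T(u')\rangle \;=\; \langle L(\bar{x}),u'\rangle \;=\; v(L,e)\,\langle e,u'\rangle.
\]
Because $\bar{x},e\in K^\circ$ and $u'\in K\setminus\{0\}$, self-duality of $K$ makes both $\langle \bar{x},u'\rangle$ and $\langle e,u'\rangle$ strictly positive. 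Therefore $\alpha$ and $v(L,e)$ must carry the same sign (including the case where both are zero).

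For the specialization, suppose $v(L,e)=0$. The displayed identity forces $\alpha=0$, and by the definition of $\alpha$ every eigenvalue $\lambda$ of $L$ satisfies $\mathrm{Re}(\lambda)\geq\alpha=0$, which is precisely the statement that $L$ is nonnegative stable. I expect no real obstacle here: the only subtlety is noticing that $L^T\in Z(K)$ (immediate from $K=K^*$) so that Theorem \ref{schneider-vidyasagar} can be applied to $L^T$ to produce the needed eigenvector in $K$; everything else is a one-line pairing argument.
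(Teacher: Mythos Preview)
Your proof is correct and follows essentially the same approach as the paper's. The only difference is a harmless transposition: the paper applies Theorem \ref{schneider-vidyasagar} directly to $L$ to obtain $u\in K$ with $L(u)=\alpha u$ and then pairs with $\overline{y}$ (using $L^T(\overline{y})=v(L,e)\,e$), whereas you apply it to $L^T$ and pair with $\overline{x}$; the two arguments are symmetric and equally short.
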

\begin{proof}
Assume that $(L,e)$ is completely mixed.  Then, by Theorem \ref{basic results}, there exist $\overline{x},\overline{y}>0$ such that $L^T(\overline{y})=v(L,e)\,e=
L(\overline{x})$. From Theorem \ref{schneider-vidyasagar}, there exists (an eigenvector) $u$ such that $0\neq u\geq 0$ and  $L(u)=\alpha u$; without loss of generality, let $\langle e,u\rangle =1$. Then, $v(L,e)=\langle v(L,e)\,e,u\rangle=\langle L^T(\overline{y}),u\rangle =\langle \overline{y}, L(u)\rangle=\alpha \langle \overline{y},u\rangle$.
 As $\langle \overline{y},u\rangle >0$ (from (\ref{well-known fact about dual})),  we conclude that   $v(L,e)$ and $\alpha$ have the same sign. In particular, when the $v(L,e)$ is zero, $\alpha=0$; so,  the real part of any eigenvalue of $L$ is nonnegative, that is, $L$ is nonnegative stable.
\end{proof}

\gap

We also have the following from \cite{gowda-tao-z}, Theorem 6 and \cite{gowda-ravindran}, Theorem 6:
\begin{proposition} \label{positive stability of z}
 {\it  The following are equivalent for $L\in Z(K)$:
 \begin{itemize}
\item $L$  is positive stable (that is, the real part of any eigenvalue of $L$ is positive).
\item There exists $u>0$ such that $L(u)>0$. 
\item  $L^{-1}$ exists and belongs to $\pi(K)$.
\end{itemize}
When $K$ is self-dual and $e\in K^\circ$,  these are further equivalent to: $v(L,e)>0$.
Moreover, in this case, $(L,e)$ is completely mixed.}
\end{proposition}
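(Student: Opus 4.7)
The plan is to establish the three equivalent conditions in a cycle $(1)\Rightarrow(3)\Rightarrow(2)\Rightarrow(1)$, and then bring in the self-dual, game-theoretic content separately. For $(1)\Rightarrow(3)$, positive stability makes every eigenvalue of $L$ have strictly positive real part, so $\|\exp(-tL)\|$ decays exponentially as $t\to\infty$ and the integral $\int_0^\infty \exp(-tL)\,dt$ converges (differentiating term by term checks that its value is $L^{-1}$, since $L\int_0^\infty \exp(-tL)\,dt=-\bigl[\exp(-tL)\bigr]_0^\infty=I$). Theorem \ref{schneider-vidyasagar} gives $\exp(-tL)\in\pi(K)$ for every $t\geq 0$, and since $\pi(K)$ is a closed convex cone, the integral lies in $\pi(K)$. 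For $(3)\Rightarrow(2)$, I pick any $w\in K^\circ$ and set $u:=L^{-1}(w)$, so that $L(u)=w>0$; because $L^{-1}$ is a linear bijection of $\V$ (hence a homeomorphism) that carries $K$ into $K$, it sends the open set $K^\circ\subset\V$ to an open subset of $K$, which is necessarily contained in $K^\circ$. Thus $u>0$.

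The step I expect to be the main obstacle is $(2)\Rightarrow(1)$, where spectral information must be extracted from a single Perron-type positivity hypothesis. The idea is to apply the second half of Theorem \ref{schneider-vidyasagar} to $L^T$: since $L\in Z(K)$ implies $L^T\in Z(K^*)$ (a direct check from the definition) and $L$, $L^T$ share their spectrum, the common number $\alpha:=\min\{\mathrm{Re}(\lambda):\lambda\text{ is an eigenvalue of }L\}$ is an eigenvalue of $L^T$ with some eigenvector $0\neq w\in K^*$. Pairing with the hypothesized $u$ gives
\[
\alpha\,\langle u,w\rangle \;=\; \langle u,L^T(w)\rangle \;=\; \langle L(u),w\rangle.
\]
Both inner products here are strictly positive, since each pairs an interior element of $K$ with a nonzero element of $K^*$. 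This forces $\alpha>0$, so every eigenvalue of $L$ has positive real part, which is condition $(1)$.

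Now suppose $K$ is self-dual and $e\in K^\circ$. The equivalence of $(2)$ with $v(L,e)>0$ is precisely Theorem \ref{basic results}(3). For the ``moreover'' assertion, let $(x,y)$ be any optimal strategy pair for $(L,e)$. Theorem \ref{basic results}(1) supplies $L(x)-v(L,e)\,e\in K$, and applying $L^{-1}\in\pi(K)$ yields
\[
x \;\geq\; v(L,e)\,L^{-1}(e).
\]
By the same homeomorphism argument used in $(3)\Rightarrow(2)$, $L^{-1}$ maps $K^\circ$ into $K^\circ$, so $L^{-1}(e)>0$; combined with $v(L,e)>0$ this forces $x>0$. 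Theorem \ref{basic results}(5) then concludes that $(L,e)$ is completely mixed.
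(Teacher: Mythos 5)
Your proposal is correct. Note, however, that the paper does not prove this proposition at all: it is recalled verbatim from \cite{gowda-tao-z} (Theorem 6) and \cite{gowda-ravindran} (Theorem 6), so there is no in-paper argument to compare against. What you have written is a self-contained proof using only facts the paper itself recalls, and each step checks out: the implication $(1)\Rightarrow(3)$ via $L^{-1}=\int_0^\infty \exp(-tL)\,dt$ together with $\exp(-tL)\in\pi(K)$ (Theorem \ref{schneider-vidyasagar}) and closedness of $\pi(K)$; the implication $(3)\Rightarrow(2)$ by noting $L^{-1}(K^\circ)$ is open and contained in $K$, hence in $K^\circ$; the implication $(2)\Rightarrow(1)$ by applying the Perron-type part of Theorem \ref{schneider-vidyasagar} to $L^T\in Z(K^*)$ and pairing the eigenvector $w\in K^*$ with the interior point $u$, where strict positivity of $\langle u,w\rangle$ and $\langle L(u),w\rangle$ forces $\alpha>0$; the equivalence with $v(L,e)>0$ by Theorem \ref{basic results}(3); and the completely mixed conclusion by applying $L^{-1}\in\pi(K)$ to $L(x)-v(L,e)\,e\geq 0$ from Theorem \ref{basic results}(1), getting $x\geq v(L,e)L^{-1}(e)>0$ and invoking Theorem \ref{basic results}(5). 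This is essentially the same circle of ideas used in the cited sources (exponential-integral representation of the inverse of a positive stable $Z$-transformation, plus a Perron eigenvector of the adjoint), so while your write-up is independent of the paper, it is not a fundamentally different method; its value is that it makes the proposition self-contained within the tools already stated in the paper. One small presentational point: in $(2)\Rightarrow(1)$ you could state explicitly the standard fact that $u\in K^\circ$ and $0\neq w\in K^*$ imply $\langle u,w\rangle>0$, since that is the hinge of the argument, but the claim itself is correct.
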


\subsection{Irreducibility concepts for a linear transformation relative to a proper cone}
Here we describe two irreducibility concepts, both relative to a  proper cone $K$ (with dual $K^*$) in a finite dimensional real inner product space $\V$. The first one, which we call cone-irreducibility, is known. It is defined (only) for those in  $\pi(K)$. Recall that $L\in \pi(K)$ if $L$ is linear and  $L(K)\subseteq K$.
\\
The following definition and result(s) are from  \cite{berman-plemmons}, where they are stated for $\Rn$; by isomorphism considerations, we state them here for $\V$. 
\begin{definition} (\cite{berman-plemmons}, Definition 3.14)
{\it Let $K$ be a proper cone in $\V$. A transformation $T$ in $ \pi(K)$ is  said to be $K$-irreducible or cone-irreducible if  the only faces of $K$ that are invariant under $T$ are $\{0\}$ and $K$.}
\end{definition} 
 
\sgap

\begin{proposition} (\cite{vandergraft} and \cite{berman-plemmons}, Chapter 1, Section 3)\label{berman-plemmons result on K-irreducibility}
{\it Let $K$ be a proper cone in $\V$ and  $T\in \pi(K)$. Then the following are equivalent:
\begin{itemize}
\item [$(a)$] $T$ is $K$-irreducible.
\item [$(b)$] $T$ has no eigenvectors on  the boundary of $K$.
\item [$(c)$] Up to scalar multiples, $T$ has exactly one  eigenvector in $K$, and this eigenvector belongs to $K^\circ$.
\item [$(d)$] The transpose of $T$ is $K^*$-irreducible.
\item [$(e)$] $(I+T)^{n-1}\big ( K\backslash \{0\}\big )\subseteq K^\circ,$ where $n$ is the dimension of $\V$.
\end{itemize}
}
\end{proposition}

Our second irreducibility concept is motivated by the usual irreducibility concept of a matrix.
Recall that a real square matrix is said to be irreducible if no (simultaneous) permutation of rows and columns produces a $2\times 2$ block matrix in which the diagonal blocks are square and the southwest (equivalently, northeast) block is zero.
 We extend this to a general linear transformation (but still based on the proper cone $K$):

\begin{definition}\label{irreducibility defn over a proper cone}
{\it Let $K$ be a proper cone in $\V$ and $L$ be linear on $\V$.  Then $L$ is said to be  space-irreducible if 
 $$\Big [ F\,\,\mbox{is a face of}\,\, K\,\mbox{with}\,\,L(F-F)\subseteq F-F\Big ] \Rightarrow F=\{0\}\,\mbox{or}\, K.$$}
\end{definition}

We say that $L$ is space-reducible if it is not space-irreducible. Trivially, when $\V$ has dimension one, every linear transformation is space-irreducible. A few simple observations are recorded below.

\begin{proposition}\label{simple observations}
{\it Let $K$ be a proper cone in $\V$ and $L$ be linear on $\V$. Then, the following hold:
\begin{itemize}
\item [$(a)$] If $L$ is space-irreducible, then $\{0\}$ and possibly $K$ are the only faces of $K$ that are invariant under $L$.
\item [$(b)$] When $L\in \pi(K)$,  space-irreducibility is the same as cone-irreducibility.
\item [$(c)$] When    $L=rI-S$, where $r\in \R$ and $S\in \pi(K)$,  space-irreducibility of $L$ is equivalent to the cone-irreducibility  of $S$.
\item  [$(d)$] When $\V=\Rn$ and $K=\Rn_+$, space-irreducibility reduces to the usual/classical one.
\end{itemize}}
\end{proposition}

\begin{proof}
$(a)$ Suppose  $L$ is space-irreducible and let $F$ be  a face of $K$ with $L(F)\subseteq F.$ Then, $L(F-F)=L(F)-L(F)\subseteq F-F.$
By space-irreducibility, $F=\{0\}$ or $K$.\\
$(b)$ Suppose $L\in \pi(K)$. If $L$ is space-irreducible, then by $(a)$,  $L$ is cone-irreducible. Now assume that $L$ is cone-irreducible and let  $L(F-F)\subseteq F-F$, where $F$ is a face of $K$. Consider any $a\in F$. Then, $L(a)$, which is in $L(F-F)$ is of the form $b-c$, where $b,c\in F$. So, $L(a)=b-c$. Since $L\in \pi(K)$ and $a\in F\subseteq K$, we have $L(a)\in K$. As $b=c+L(a)$ and $F$ is a face of $K$, we see that $L(a)\in F$.  
Hence, $L(F)\subseteq F$. From the cone-irreducibility, $F=\{0\}$ or $K$. Thus, $L$ is space-irreducible.\\
$(c)$ Clearly, $L=rI-S$ is space-irreducible if and only if $S$ is space-irreducible;  the stated assertion follows from $(b)$.
\\
$(d)$ This statement is easy to see as every face of $\Rn_+$, upto a permutation, is of the form $\R^k_+\times \{0\}$.
\end{proof}

\gap

In \cite{elsner}, Elsner introduced the following type of irreducibility (which we shall call $E$-irreducibility).
Let $K$ be a proper cone in $\V$. Recall the  order relative to $K$: $x\leq y$ if $y-x\in K.$ For any $y\in K$, let 
$$[y]:=\{x\in \V: 0\leq x\leq y\}\quad\mbox{and}\quad S_y:=\mathrm{span}( [y]).$$

\begin{definition}\cite{elsner}\label{elsner's defn}
{\it Let $K$ be a proper cone in $\V$ and $L$ be linear on $\V$.  Then $L$ is said to be E-irreducible  if for any $y\in K$,
 $$ L(S_y)\subseteq S_y\, \Rightarrow S_y=\{0\}\,\,\mbox{or}\,\,S_y=\V.$$
 }
\end{definition}

We now have an important observation (due to Orlitzky \cite{orlitzky-seminar}): {\it  The space-irreducibility is the same as E-irreducibility}:\\This is seen as follows. For any $y\in K$,   $[y]=K\cap (y-K)$. Consider the face $F$ generated by $y$ (which is the smallest face containing $y$) so $y\in relint(F)$, that is, $y$ is in the relative interior of $F$.  Then, by the definition of face, $K\cap (y-K)=F\cap (y-F).$ As $y\in relint(F)$, 
$\mathrm{span}( [y])=\mathrm{span} ( F\cap (y-F))=F-F$.  Hence $S_y=F-F$. On the other hand, suppose $F$ is a face of $K$; take any $y$ that is in the relative interior of $F$.  Then $K\cap (y-K)=F\cap (y-F)$ and so $S_y=\mathrm{span}([y])=F-F$. Thus, for each $y\in K$, there is a face $F$ such that $S_y=F-F$ and  conversely, for every face $F$, there is some $y\in K$ such that $S_y=F-F$. It is now easy to see that the two definitions are equivalent.

\gap

Now, based on the above equivalence and the results in \cite{elsner} (where Elsner states his  results for quasimonotone/cross-positive matrices, i.e., for the negatives of our $Z$-transformations), we have the following:

\begin{theorem} \label{E and space-irreducibility}
{\it 
Let $K$ be a proper cone in $\V$ and $L\in Z(K)$. Then, relative to $K$, the following are equivalent:
\begin{itemize}
\item [$(i)$] $exp(-tL)$ is cone-irreducible for some $t>0$.
\item [$(ii)$] $L$ is space-irreducible (equivalently, $E$-irreducible).
\item [$(iii)$] $L$ has no eigenvector on the boundary of $K$.
\item [$(iv)$]  Except for a countable number of $t$s in the interval $(0,\infty)$, $exp(-tL)$ is cone-irreducible.
\end{itemize}
}
\end{theorem}

\begin{proof}
$(i)\Rightarrow (ii)$:  As $L\in Z(K)$, we know from (\ref{schneider-vidyasagar result}), $exp(-tL)\in \pi(K)$ for all $t\geq 0$. Consider/fix  a $t>0$ for which $exp(-tL)$ is $K$-irreducible. Suppose, if possible, 
$L(F-F)\subseteq F-F$ for some face $F$ of $K$. Let $W:=F-F$. Then, for every  $k\in \{0,1,2\ldots\}$, $(-tL)^k$ keeps the subspace $W$ invariant. Therefore $exp(-tL)$ keeps  $W$ invariant, i.e., $exp(-tL)(F-F)\subseteq F-F$.  . Now take any $x\in F$. Then, $y:= exp(-tL)x\in K$ and $exp(-tL)x=a-b$, where $a,b\in F$. Then, $a=y+b$ with $a\in F$, $y\in K$, and $b\in F\subseteq K$. Since $F$ is a face,
$y$ must be in $F$. This shows that for every $x\in F$, $exp(-tL)x\in F$. Thus, $exp(-tL)$ keeps $F$ invariant. By our assumption, $F=\{0\}$ or $K$. This proves  that $L$ is space-irreducible.
\\
$(ii)\Rightarrow (iii)$: See \cite{elsner}, Satz 1 and Satz 3.\\
$(iii)\Rightarrow (iv)$: See \cite{elsner},  Satz 3.\\
$(iv)\Rightarrow (i)$: Obvious.
\end{proof}

\noindent{\bf Remarks.}  It is easy to see that  $L\in Z(K)$ if and only if $L^T\in Z(K^*)$. Now suppose $L\in Z(K)$ and is space-irreducible relative to $K$. Then, by the above result, 
$exp(-tL)$ is $K$-irreducible for some $t>0$. By the equivalence $(a)\Leftrightarrow (d)$ in Proposition 
\ref{berman-plemmons result on K-irreducibility}, $exp(-tL^T)$ is $K^*$-irreducible. It follows that $L^T$ is space-irreducible relative to $K^*$. Since $(L^T)^T=L$ and $(K^*)^*=K$, we see that 
\begin{center}
{\it When  $K$ is a proper cone and $L\in Z(K)$, \\$L$ is space-irreducible relative to $K$ if and only if $L^T$ is space-irreducible relative to $K^*$.}
\end{center}

\sgap  

We note that  Item $(i)$ in the above theorem deals with a scaled $L$. In particular, the cone-irreducibility of $exp(-L)$ implies the space-irreducibility of $L$. We may ask when the converse holds.  Here is a partial answer.

\begin{corollary}
{\it Let $K$ be a proper cone in $\V$ and $L=rI-S$, where $r\in \R$ and $S\in \pi(K)$. Then, $L$ is space-irreducible if and only if $exp(-L)$ is cone-irreducible.
}
\end{corollary}

\begin{proof}
Clearly, $L\in Z(K)$.  If $exp(-L)$ is cone-irreducible, then by Theorem \ref{E and space-irreducibility}, $L$ is space-irreducible. Now suppose $L$ is space-irreducible and let $F$ be a face of $K$ such that  $exp(-L)(F)\subseteq F$ with $F\neq \{0\}, K$. By Proposition \ref{simple observations}, $S$ is cone-irreducible. As $rI$ and $S$ commute, from  $exp(-L)(F)\subseteq F$ we see that  $exp(S)(F)\subseteq F$.  Let $0\neq x\in F$ so that $exp(S)x\in F\subseteq\partial(K)$. By the supporting hyperplane theorem,
there exists $0\neq y\in K^*$ such that $\langle exp(S)x,y\rangle =0.$ Expanding $exp(S)$ as a series, we see that $\langle S^k(x),y\rangle=0$ for all $k=0,1,2\ldots$. Then, $\langle (I+S)^{n-1}x,y\rangle=0$, where $n$ is the dimension of $\V$. However, as $S$ is cone-irreducible, from Theorem \ref{berman-plemmons result on K-irreducibility}, $(I+S)^{n-1}x\in K^\circ$. This implies, because of (\ref{well-known fact about dual}),   $\langle (I+S)^{n-1}x,y\rangle> 0$. We reach a
 a contradiction. Thus, $F=\{0\}$ or $K$; so $exp(-L)$ is cone-irreducible.
\end{proof}

\section{The completely mixed property}
In this section, we present our main result describing the completely mixed property of a $Z$-transformation with value zero.
For motivation, we present the following result in the classical matrix game setting which, perhaps, is known. (See  Theorem 3.7 in \cite{li} for an implicit formulation dealing with a singular $Z$-matrix.)  For lack of a precise reference, we state the result with its proof. 

\begin{theorem}\label{z-matrix result}
{\it Suppose $A$ is a $Z$-matrix with value zero. Then $A$ is completely mixed if and only if it is irreducible.
}
\end{theorem}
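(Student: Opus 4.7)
The plan is to use Theorem~\ref{basic results}$(6)$: when $v(A)=0$, complete mixedness is equivalent to $\dim(\ker A)=1$ together with the existence of strictly positive $\bar{x},\bar{y}$ annihilated by $A$ and $A^T$ respectively. Throughout, I would use the representation $A=rI-B$ with $B\geq 0$ entry-wise, and note that classical irreducibility of $A$ coincides with irreducibility of $B$, so that the Perron--Frobenius theorem becomes available.

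For the forward implication I would argue by contrapositive. If $A$ is reducible then, after a simultaneous row/column permutation,
\[
A=\begin{pmatrix} A_{11} & A_{12} \\ 0 & A_{22} \end{pmatrix},
\]
with $A_{11},A_{22}$ square $Z$-matrices. Suppose for contradiction that $A$ is completely mixed, and let $\bar{x}=(u,v)>0$, $\bar{y}=(p,q)>0$ be the positive kernel vectors of $A$ and $A^T$ from Theorem~\ref{basic results}$(6)$. The block structure forces $A_{22}v=0$ and $A_{11}^{T}p=0$; the latter shows that $A_{11}$ itself has a nontrivial kernel. Picking any $u'\neq 0$ with $A_{11}u'=0$ then produces $(u',0)\in\ker A$, which is linearly independent from $\bar{x}$ because $v\neq 0$. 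Hence $\dim(\ker A)\geq 2$, contradicting Theorem~\ref{basic results}$(6)$.

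For the reverse implication, let $A=rI-B$ be an irreducible $Z$-matrix with $v(A)=0$, and set $\rho=\rho(B)$. By Perron--Frobenius for irreducible nonnegative matrices, $\rho$ is an algebraically simple eigenvalue of both $B$ and $B^T$, with strictly positive eigenvectors $u,w>0$. I would rule out $r\neq\rho$ in two steps. If $r>\rho$, then every eigenvalue of $A$ has positive real part, so $A$ is positive stable and Proposition~\ref{positive stability of z} yields $v(A)>0$. If $r<\rho$, set $\alpha=r-\rho<0$ and normalize $w$ so that $\langle w,e\rangle=1$; then $w\in\Delta(e)$ and $A^{T}w=\alpha w$, so
\[
v(A)\,\leq\,\max_{x\in\Delta(e)}\langle Ax,w\rangle\,=\,\alpha\min_{x\in\Delta(e)}\langle x,w\rangle\,=\,\alpha\min_{i}w_{i}\,<\,0.
\]
Each case contradicts $v(A)=0$, forcing $r=\rho$. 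Then $0$ is a simple eigenvalue of $A$, and the positive Perron vectors $u,w$ witness the hypotheses of Theorem~\ref{basic results}$(6)$, so $A$ is completely mixed.

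The step I expect to require the most care is the $r<\rho$ case in the reverse direction: translating the sign of the Perron eigenvalue $\alpha$ into the sign of $v(A)$ relies on the explicit vertex description of $\Delta(e)$ in $\Rn$, which collapses the inner maximum to a coordinate extremum of $w$. This elementary reduction is exactly what is unavailable over a general self-dual cone, which is presumably why the classical theorem is cited as motivation for the paper's main results rather than being recovered as a special case of them.
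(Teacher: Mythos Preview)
Your proof is correct. The forward direction is essentially the same idea as the paper's---both contradict $\dim(\ker A)=1$ from Theorem~\ref{basic results}$(6)$---though your shortcut of taking any $u'\in\ker A_{11}$ and exhibiting $(u',0)\in\ker A$ is a bit quicker than the paper, which first argues $A_{12}=0$ (using $A_{12}\leq 0$ and $\bar{x}_2>0$) so that $A$ becomes block-diagonal.

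The reverse direction, however, follows a genuinely different route. The paper argues directly: if an optimal strategy $\bar{x}$ with $A\bar{x}\geq 0$ has a zero coordinate, then in the induced block form the southwest block $A_3$ satisfies $A_3\bar{x}_1\geq 0$ with $A_3\leq 0$ and $\bar{x}_1>0$, forcing $A_3=0$ and contradicting irreducibility; hence every optimal $\bar{x}$ is strictly positive and Theorem~\ref{basic results}$(5)$ applies. Your argument instead goes through Perron--Frobenius: you pin down $r=\rho(B)$ by ruling out $r>\rho$ (positive stability) and $r<\rho$ (a direct estimate on $v(A)$ via the left Perron vector), and then invoke Theorem~\ref{basic results}$(6)$ with the Perron eigenvectors. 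Both are valid; the paper's approach has the advantage of being the exact template that generalizes to Euclidean Jordan algebras in Theorem~\ref{z result over eja} (via Lemmas~\ref{z-lemma} and~\ref{lemma for mixed LLS}), where no Perron--Frobenius substitute is assumed. Your spectral argument, by contrast, makes transparent the connection noted in the paper's Remark after Theorem~\ref{z-matrix result}---that such an $A$ is necessarily a singular irreducible $M$-matrix---since you derive $r=\rho(B)$ as part of the proof rather than as a corollary.
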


\begin{proof}
Let $A\in \R^{n\times n}$. Suppose $A$ is irreducible. Consider a strategy $\overline{x}\in \R^n$ such that $A\overline{x}\geq 0$. Suppose $\overline{x}$ has a zero component. Modulo a permutation, we may assume that the first $k$ components of $\overline{x}$ are nonzero, where $1\leq k<n$. Let $\overline{x}_1$ be the vector in $\R^k$ with these nonzero components. Correspondingly,  we rewrite $A$ as a  $2\times 2$ block matrix with square diagonal blocks and $A_3$ in the southwest corner. Then, $A\overline{x}\geq 0$ implies that 
$A_3\overline{x}_1\geq 0$. Since $A_3\leq 0$ (as $A$ is a $Z$-matrix) and $\overline{x}_1>0$, we must have $A_3=0$. As $A$ is assumed to be irreducible, this cannot happen. Hence, $\overline{x}>0$. This proves that $A$ is completely mixed.\\
Now suppose $A$ is completely mixed. Let $(\overline{x},\overline{y})$ be the unique strategy pair such that $A^T\overline{y}=0=A\overline{x}$. We also know from Kaplansky's result \cite{kaplansky-1}  that the $ker(A)$ has dimension $1$.  Assume if possible, $A$ is reducible. Without loss of generality,  we write (in the block form)
$$A=\left [ \begin{array}{cc} A_1&A_2\\0 & A_4\end{array}\right ],\quad \overline{x}=
\left [ \begin{array}{c}
\overline{x}_1 \\ \overline{x}_2\end{array} \right ],\quad\mbox{and}\quad  \overline{y}=
\left [ \begin{array}{c}
\overline{y}_1 \\ \overline{y}_2\end{array} \right ].
$$
From $A^T\overline{y}=0=A\overline{x}$, we get $A_1\overline{x}_1+A_2\overline{x}_2=0=A_1^T\overline{y}_1$. Since $A$ is a $Z$-matrix, $A_2\leq 0$; hence, $A_1\overline{x}_1\geq 
-A_2\overline{x}_2\geq 0$. Thus, $A_1^T\overline{y}_1=0\leq A_1\overline{x}_1$. Since $ 0=\langle A_1^T\overline{y}_1,\overline{x}_1\rangle =\langle \overline{y}_1,A_1\overline{x}_1\rangle$ and $\overline{y}_1$ is positive, we must have $A_1\overline{x}_1=0$. But then, $A_2\overline{x}_2=0$. Since $A_2\leq 0$ and $\overline{x}_2>0$, we must have $A_2=0$. Thus,
$$A=\left [ \begin{array}{cc} A_1 & 0\\0 & A_4\end{array}\right ]$$
with $A_1\overline{x}_1=0$ and $A_4\overline{x}_2=0$. By considering scalar multiples of positive vectors $\overline{x}_1$ and $\overline{x}_2$, we see that $ker(A)$ has dimension at least $2$. This yields a contradiction. Hence, $A$ must be irreducible.
 \end{proof}

\gap

Here is a simple example illustrating the above result.

\begin{example}
In the setting of a matrix game, consider the symmetric irreducible $Z$-matrix
$$A=\left [
\begin{array}{rr}
1 & -1 \\
-1 & 1
\end{array} \right ].
$$
 With ${\bf 1}$ denoting the vector of ones, we see that $A{\bf 1}=0$. So, with $\overline{x}=\overline{y}=\frac{1}{2}{\bf 1}$, we have
$A^T(\overline{y})= 0= A\overline{x}.$
Thus, the value of $A$ is zero. Since the conditions $x\geq 0$, $Ax\geq 0$ impliy that $x$ is a multiple of ${\bf 1}$, we see that $A$ is completely mixed.
\end{example}

\sgap

\noindent{\bf Remarks.} Suppose $A$ is an irreducible $Z$-matrix with value zero. Then, by the above result, there exists $\overline{x}>0$ such that $A\overline{x}=0$. 
Writing  $A=rI-B$, where $B$ is a nonnegative matrix, we have $B\overline{x}=r\overline{x}$. As $B$ is nonnegative and irreducible, by the Perron-Frobenius theorem, $r=\rho(B)$. Thus,  $A=\rho(B)\,I-B$ is now an $M$-matrix. So, {\it every irreducible $Z$-matrix with value zero is an irreducible singular $M$-matrix.} 

\gap

We now present our main theorem.

\begin{theorem}\label{space-irreducibility equals cm}
{\it Let $\V$ be a finite dimensional real inner product space, $K$ be a self-dual cone in $\V$ and $e\in K^\circ$. Suppose $L\in Z(K)$  with $v(L,e)=0$.
Then $(L,e)$ is completely mixed if and only if $L$ is space-irreducible.}
\end{theorem}

\gap

\begin{proof} We are given that $L\in Z(K)$  with $v(L,e)=0$.\\
{\it `If' part:} Assume that $L$ is space-irreducible. 
By   (\ref{schneider-vidyasagar result}), $exp(-tL)\in \pi(K)$ for all $t\geq 0$; also,  by Theorem \ref{E and space-irreducibility}, $exp(-t_0L)$ is $K$-irreducible for some $t_0>0$.  Since  $t_0L\in Z(K)$, $v(t_0L,e)=t_0v(L,e)=0$, and  the game $(L,e)$ is completely mixed if and only if the game $(t_0L,e)$ is completely mixed,  it is enough to show that $(t_0L,e)$ is completely mixed. We assume, without loss of generality, that $t_0=1$. Then,
$L\in Z(K),\,v(L,e)=0$ and $exp(-L)$ is $K$-irreducible. We now show that $(L,e)$ is completely mixed.\\  
Since $L\in Z(K)$,   by Theorem \ref{schneider-vidyasagar},
$$\alpha:=\min\,\{Re(\lambda): \lambda\,\,\mbox{is an eigenvalue of}\,\,L\}$$ 
is an eigenvalue of $L$ with an associated eigenvector $u\in K$. So,
$L(u)=\alpha \,u$ with $0\neq u\geq 0$. Then, $exp(-L)u=exp(-\alpha)\,u$. As $exp(-L)$ is $K$-irreducible, we must have $u>0$ (by Proposition \ref{berman-plemmons result on K-irreducibility}). Thus,
$$L(u)=\alpha\,u\quad\mbox{with}\quad u>0.$$
Since $K$ is self-dual, we see that $L^T$ is a $Z$-transformation on $K$; moreover, $exp(-L^T)$, which is the transpose of $exp(-L)$, is also $K$-irreducible by Proposition \ref{berman-plemmons result on K-irreducibility}.  Since the eigenvalues of $L$ and $L^T$ are the same, we see that $\alpha=\min\,\{Re(\lambda): \lambda\,\,\mbox{is an eigenvalue of}\,\,L^T\}$. So, as in the previous argument, there exists $v>0$ such that $L^T(v)=\alpha\,v$. \\
As $v(L,e)=0$,  let  $(\overline{x},\overline{y})$ be  an (arbitrary) optimal strategy pair so that  
$$L^T(\overline{y})\leq 0\leq L(\overline{x}).$$
We claim  that $\overline{x}>0$.\\
Now, from $L^T(\overline{y})\leq 0$ and $u>0$, we get
$$0\geq \langle L^T(\overline{y}),u\rangle =\langle \overline{y},L(u)\rangle =\alpha \langle \overline{y},u\rangle.$$
Since $0\neq \overline{y}\geq 0$ and $u>0$, we have $\langle \overline{y},u\rangle>0$. Thus, $\alpha\leq 0$.\\
Similarly, by working with $L(\overline{x})\geq 0$ and $L^T(v)=\alpha\,v$, we deduce that $\alpha\geq 0$. Hence, $\alpha=0$. So, 
$L(u)=0=L^T(v)\,\,\mbox{with}\,\,u,v>0$; consequently, $exp(-L)(u)=u$. Now the inequalities $v>0$, $L(\overline{x})\geq 0$,  and $ \langle L(\overline{x}),v\rangle=\langle \overline{x},L^T(v)\rangle=0$ imply that $L(\overline{x})=0$.  From this, we get $exp(-L)(\overline{x})=\overline{x}$. So,  $\overline{x}$ is an eigenvector of $exp(-L)$; as $\overline{x}\in K$, by the $K$-irreducibility of $exp(-L)$, $\overline{x}>0.$ So, we have proved that for any strategy pair $(\overline{x},\overline{y})$, $\overline{x}>0$.
Thus, $(L,e)$ is completely mixed. 
\\
{\it `Only if' part:}
 Now assume that  $(L,e)$ is completely mixed. As $v(L,e)=0$, by Theorem \ref{basic results}, there exists a unique strategy pair $(\overline{x},\overline{y})$ such that $\overline{x},\overline{y}>0$ and 
$$L^T(\overline{y})=0=L(\overline{x}).$$
Additionally, $ker(L)$ (being one-dimensional) consists of multiples of $\overline{x}$. We claim that $L$ is space-irreducible. 
Suppose $F$ is a nonzero face of $K$ such that $L(F-F)\subseteq F-F$. Let $W:=F-F.$ Then, for all $t\geq 0$, $exp(-tL)(W)\subseteq W$. As $L\in Z(K)$, we  have  $exp(-tL)(K)\subseteq K$ for all $t\geq 0$ and so 
$exp(-tL)(W\cap K)\subseteq W\cap K$. However, $W\cap K=F$ as $F$ is a face of $K$. Hence, $exp(-tL)(F)\subseteq F$ for all $t\geq 0$. As $F$ is a proper cone in $W$, by Theorem \ref{schneider-vidyasagar},  $L|_W$ ($L$ restricted to $W$) is a $Z$-transformation on $F$ and 
$$\alpha:=\min\,\{Re(\lambda): \lambda\,\,\mbox{is an eigenvalue of}\,\,L|_W\}$$ 
is an eigenvalue of $L$ with an associated eigenvector $u\in F$. Explicitly, $L(u)=\alpha u$ with $0\neq u\in F$. Then
$$0=\langle L^T(\overline{y}),u\rangle=\langle \overline{y}, L(u)\rangle =\langle \overline{y}, \alpha\,u\rangle=\alpha\,\langle \overline{y},u\rangle.$$
Since $\overline{y}>0$ and $0\neq u\in F\subseteq K$, we have $\langle \overline{y},u\rangle >0$. Hence, $\alpha=0$. At this stage, we have $L(u)=0$, that is, $u\in ker(L)$.
Since $ker(L)$ is one-dimensional and $L(\overline{x})=0$, we see that $u$ is a positive multiple of $\overline{x}$. This proves that $u>0$, that is, $u\in K^\circ$. Since $u$ belongs to the face $F$, we must have $F=K$. This shows that $L$ is space-irreducible. 
\end{proof} 

\gap

\noindent{\bf Remarks.} In an earlier version of the paper, see \cite{gowda-cm-arxiv}, the equivalence of completely mixed property and space-irreducibility was proved under certain restrictions. The implication that the completely mixed property implies space-irreducibility was proved under the additional assumption that each face of $K$ is self-dual in its span, see \cite{gowda-cm-arxiv}, Theorem 3.4. Also, the implication that space-irreducibility implies completely mixed property was proved only for certain types of $Z$-transformations over Euclidean Jordan algebras, see \cite{gowda-cm-arxiv}, Theorem 4.2. The above theorem/proof removes these restrictions.

\gap

In a recent work, Encinas, Mondal, and Sivakumar \cite{encinas et al} study the concepts of range monotone and trivially range monotone properties of Lyapunov and Stein transformations on the space $\Sn$ of all real $n\times n$ symmetric matrices. Their motivation comes from the matrix theory result that every singular irreducible $M$-matrix is almost monotone and has a group inverse, see \cite{berman-plemmons}, Theorem 4.1 and  \cite{encinas et al}, Theorem 1.3. We make three observations: First, a singular $M$-matrix is a $Z$-matrix with  value zero; second, 
 Theorem 4.16 in \cite{berman-plemmons} appears to be related to Kaplansky's
characterization of completely mixed property; and third, Lyapunov and Stein transformations on $\Sn$ are particular examples of $Z$-transformations.  Motivated by these considerations,
we now formulate the following broader result.

\begin{corollary}\label{generalization of encinas et al result} 
 {\it Let $\V$ be a finite dimensional real inner product space, $K$ be a self-dual cone in $\V$ and $e\in K^\circ$. Suppose $L\in Z(K)$, $v(L,e)=0$, and space-irreducible. Then, the following statements hold:
\begin{itemize}
\item [$(a)$] $L$ is almost monotone, that is, $L(x)\geq 0\Rightarrow L(x)=0.$
\item [$(b)$] $L$ is trivially range monotone, that is, $L^2(x)\geq 0\Rightarrow L(x)=0$.
\item [$(c)$] $ker(L^2)=Ker(L)$.
\item [$(d)$] the group inverse of $L$ exists. 
\end{itemize}
}
\end{corollary}

\begin{proof}
Under the specified assumptions, by the above theorem, space-irreducibility implies the completely mixed property.  Now, from Theorem \ref{basic results}, there exist $\overline{x},\overline{y}>0$ such that 
$$L^T(\overline{y})=0=L(\overline{x}).$$
To see $(a)$, suppose $L(x)\geq 0$. Then,
$$0\leq \langle  L(x),\overline{y}\rangle =\langle x,L^T(\overline{y})\rangle =0.$$
As $\overline{y}>0$ and $L(x)\geq 0$, we must have $L(x)=0$.\\
$(b)$: Now suppose $L^2(x)\geq 0$, that is, $L(L(x))\geq 0$ for some $x\in \V$. Then, from $(a)$, $L(L(x))=0$, that is, $L(x)\in Ker(L)$.
We know from Theorem \ref{basic results}, Item (6) that $Ker(L)=\{\lambda \overline{x}:\lambda\in \R\}.$ So,
$L(x)=\lambda \overline{x}$  for some $\lambda\in \R$. We claim that $\lambda=0.$ Suppose $\lambda>0$ so that
$L(x)=\lambda \overline{x}>0$. But then, for any small $\varepsilon >0$, $\overline{x}+\varepsilon x>0$ and $L(\overline{x}+\varepsilon x)=\varepsilon L(x)>0$.
As $L$ is a $Z$-transformation, this implies that $L$ is invertible, see Proposition \ref{positive stability of z}. We reach a contradiction as $L(\overline{x})=0$ with $\overline{x}>0$. Hence, $\lambda$ cannot be positive.
If $\lambda<0$, we can work with $-x$ in place of $x$ to get a contradiction. Hence, $\lambda=0$, so $L(x)=0$. Thus,
$$L^2(x)\geq 0\Rightarrow L(x)=0.$$
This proves $(b)$.\\
When $L^2(x)=0$, we have, from $(b)$, $L(x)=0$. Thus, $Ker(L^2)\subseteq Ker (L)$. As $Ker(L)\subseteq Ker(L^2)$ always, we get the equality in $(c)$.\\
Because of $(c)$, the group inverse of $L$ exists, see \cite{robert}, Theorem 5.
\end{proof}

\begin{example}\label{singular irreducible M-transformation}
Let $\V$ be a finite dimensional real inner product space, $K$ be a self-dual cone in $\V$ and $e\in K^\circ$. On  $\V$, consider the transformation $L=\rho(S)\,I-S$, where $S\in \pi(K)$ with $\rho(S)$ denoting the spectral radius of $S$. 
Suppose $L$ is space-irreducible (equivalently, $S$ is $K$-irreducible, by Proposition \ref{simple observations}). Then, by the Krein-Rutman Theorem (see Theorem 3.2 in \cite{berman-plemmons}) and Proposition \ref{berman-plemmons result on K-irreducibility}, there exists $\overline{x},\overline{y}>0$ such that $S^{T}(\overline{y})=\rho(S)\,\overline{y}$ and $S(\overline{x})=\rho(S)\,\overline{x}$. We see that  $L^{T}(\overline{y})=0=L(\overline{x})$ and so $v(L,e)=0$. Moreover, by Theorem \ref{space-irreducibility equals cm}, $(L,e)$ is completely mixed. In analogy with matrices, we may call $L$, a `singular $M$-transformation'. So, as in the matrix case, {\it a singular space-irreducible $M$-transformation is completely mixed.}
A simple example of such an $L$ can be seen by defining  $S(x)=\langle x,e\rangle e$. To get more general $L$, we may consider  
 a `strictly positive transformation' which satisfies the condition $0\neq x\geq 0\Rightarrow S(x)>0$. Then, $S$ is $K$-irreducible (by Proposition \ref{berman-plemmons result on K-irreducibility}) and  $\rho(S)>0$.  Thus the game $(L,e)$, where $L=\rho(S)\,I-S$,  has value zero and is completely mixed. In this way, we get a large collection of $Z$-transformations with value zero and having the completely mixed property. 
\end{example}

\section{Completely mixed and irreducibility results on symmetric cones in Euclidean Jordan algebras}
In this section, we specialize by assuming that the given proper cone $K$ is a symmetric cone which, by definition, is self-dual and homogeneous. As every symmetric cone appears as the cone of squares in a Euclidean Jordan algebra, we use the Jordan algebraic machinery to describe our completely mixed property and irreducibility results and also to provide interesting examples. First, we cover some background material. \\

For basic definitions and results on Euclidean Jordan algebras (particularly, those that are not explicitly mentioned/referenced), we refer to \cite{faraut-koranyi,gowda-sznajder-tao}.
Let $(\V, \circ, \langle\cdot,\cdot\rangle)$ denote a  Euclidean Jordan algebra  with unit element $e$; 
here, for any two elements $x,y$, the Jordan product and inner product are denoted, respectively,  by $x\circ y$ and $\langle x,y\rangle$. 
It is well-known that 
any Euclidean Jordan algebra is a direct product/sum
of simple Euclidean Jordan algebras and every simple Euclidean Jordan algebra is isomorphic to one of five algebras,
three of which are the algebras of $n\times n$ real/complex/quaternion Hermitian matrices. The other two are: the algebra of $3\times 3$ octonion Hermitian matrices and the Jordan spin algebra $\Ln$. In the algebras $\Hn$ (of all $n\times n$ complex Hermitian matrices) and $\Sn$ (of all $n\times n$ real symmetric matrices), 
 the Jordan product and the inner product are given by 
$X\circ Y:=\frac{XY+YX}{2}\quad \mbox{and}\quad \langle X,Y\rangle:=tr(XY).$ The algebra $\Rn$ carries the componentwise product (as the Jordan product) and the usual inner product.
 
In the Euclidean Jordan algebra $\V$, the set 
$$K=\V_+:=\{x\circ x:x\in \V\}$$
is (called) the symmetric cone of $\V$. It is a {\it self-dual cone}, so 
$x\in K\Leftrightarrow \langle x,y\rangle \geq 0$ for all $y\in K.$ 
Morover, $u\in K^\circ \Leftrightarrow \langle u,y\rangle>0$ for all $0\neq y\in K$.  In the case of $\Hn$ (or $\Sn$), $K$ is the cone of positive semidefinite matrices.\\
In $\V$, the following holds \cite{gowda-sznajder-tao}: When $x,y\in K$,
\begin{equation}\label{zero jordan product}
x\circ y=0\Leftrightarrow \langle x, y\rangle=0.
\end{equation}

\gap

An element $c$ in $\V$ is an {\it idempotent} if $c^2=c$; it is a {\it primitive idempotent} if it is nonzero and cannot be written as sum of two other nonzero idempotents. A {\it Jordan frame}
 $\{e_1,e_2,\ldots, e_n\}$ in $\V$ consists of
primitive idempotents that are mutually orthogonal and with sum equal to the unit element $e$.  It is known that all Jordan frames in $\V$ have the same number of elements -- called the 
rank of $\V$. 
{\it With $n$ denoting the rank of $\V$}, we have the 
 {\it spectral decomposition theorem} (\cite{faraut-koranyi}, Theorem III.1.2):
{\it Every $x$ in $\V$ can be written as 
 $$x=x_1e_1+x_2e_2+\cdots+x_ne_n,$$
where the real numbers $x_1,x_2,\ldots, x_n$ are (called) the eigenvalues of $x$ and
$\{e_1,e_2,\ldots, e_n\}$ is a Jordan frame in $\V$. 
}
Then, we define the rank of $x$ -- denoted by $\mbox{rank}\, x$ -- as the number of nonzero eigenvalues of $x$.
Corresponding to the above decomposition, we define the trace of $x$ as  
$\tr(x):=x_1+x_2+\cdots+x_n.$
It is known that $(x,y)\mapsto \tr(x\circ y)$ defines another inner product on $\V$ that is compatible with the Jordan product.
{\it Throughout this paper we assume that the inner product on $\V$
is this trace inner product, that is,
$\langle x,y\rangle=\tr(x\circ y).$} 

\gap

Given an idempotent $c$ and $\gamma\in \{1,\frac{1}{2},0\}$, we let 
$$\V(c,\gamma):=\{x\in \V: x\circ c=\gamma\,x\}.$$ 
It is known, see \cite{faraut-koranyi}, Prop. IV.1.1, that  $\V(c,1)$ and $\V(c,0)$ are subalgebras of $\V$ and
$$\V(c,1)\circ \V(c,0)=\{0\}.$$ More importantly,  the following {\it Peirce (orthogonal) decomposition} holds:
\begin{equation}\label{peirce decomposition}
\V=\V(c,1)+\V(c,\frac{1}{2})+\V(c,0).
\end{equation}

Let $\V_+(c,1)$ denote the symmetric cone of the (sub)algebra $\V(c,1)$. Then, 
$$\V(c,1)=\V_+(c,1)-\V_+(c,1)$$ and, moreover, $\V_+(c,1)$ is self-dual in $\V(c,1)$. Regarding the faces of a symmetric cone, we have the following:

\begin{proposition}  (\cite{gowda-sznajder}, Theorem 3.1)\label {face of a symmetric cone}
{\it Every face of  the symmetric cone $\V_+$ is of the form $\V_+(c,1)$ for some idempotent $c$. }
\end{proposition}

\sgap

If $\{e_1,e_2,\ldots, e_n\}$ is a Jordan frame in $\V$, then
we have (another) {\it Peirce orthogonal decomposition} 
(Theorem IV.2.1 in \cite{faraut-koranyi}):
\begin{equation}\label{POD}
\V=\sum_{1\leq i\leq j\leq n}\V_{ij},
\end{equation}
where $\V_{ii}:=\V(e_i,1)=\R\, e_i$ for all $i$ and for $i\neq j$, 
$\V_{ij}:=\V(e_i,\frac{1}{2})\cap \V(e_j,\frac{1}{2}).$
\\

Given $a\in \V$, the corresponding {\it Lyapunov transformation} and {\it quadratic representation} are defined by:
$$L_a(x):=a\circ x\quad\mbox{and}\quad P_a(x):=2a\circ (a\circ x)-a^2\circ x\quad (x\in \V).$$
These two transformations are self-adjoint, $L_a\in Z(K)$, and $P_a\in \pi(K)$.

\sgap

Given (\ref{POD}), consider $a,x\in \V$ with
$a=a_1e_1+a_2e_2+\cdots+a_ne_n$ and
$$x=\sum_{1\leq i\leq j\leq n}x_{ij}\quad (x_{ij}\in \V_{ij}).$$
 Then we have the well-known formulae
\begin{equation}\label{formula for La}
L_a(x)=\sum_{1\leq i\leq j\leq n}\Big (\frac{a_i+a_j}{2}\Big )x_{ij} \quad\mbox{and}\quad P_a(x)=\sum_{1\leq i\leq j\leq n}a_ia_j\,x_{ij}.
\end{equation}

Let $L$ be a Lyapunov-like transformation on the symmetric cone $K$. Then, its symmetric part $\frac{L+L^T}{2}$ and skew-symmetric part $\frac{L-L^T}{2}$ are both Lyapunov-like. Moreover, it is known, see \cite{tao-gowda-representation}, that the symmetric part is of the form $L_a$ for some $a\in \V$ and the skew-symmetric part is a derivation $D$ (which means that  $D(x\circ y)=Dx\circ y+x\circ Dy$ for all $x,y\in \V$). So, in the setting of a Euclidean Jordan algebra,
\begin{equation}\label{LL representation in EJA}
L\in \LL(K)\Leftrightarrow L=L_a+D\,\,\mbox{for some}\,\,a\in \V\,\mbox{and derivation}\,\,D.
\end{equation}

It is known (a result due to Damm, see \cite{tao-gowda-representation}) that in the Euclidean Jordan algebra $\Hn$, every Lyapunov-like transformation is of the form $L_A$ for some complex matrix $A$, where
$$L_A(X)=AX+XA^*\quad(X\in \Hn).$$
A similar statement holds in $\Sn$  with $A$ real and $A^*$ replaced by $A^T$. 

\gap

{\it Henceforth, in all game-theoretic results/concepts dealing with Euclidean Jordan algebras, we  assume that $e$ is the unit element and write $v(L)$ in place of $v(L,e)$, etc.}
\subsection{A generalization of a result of Parthasarathy et al.}

The following is a generalization of the recent result by Parthasarathy et al. \cite{parthasarathy et al},  mentioned in the Introduction. 
\begin{theorem}\label{generalization of tp et al result}
 {\it Suppose $L$ is a linear transformation on the  Euclidean Jordan algebra $\V$ with $v(L)=0$.
Then, $L$ is completely mixed if and only if $v(L+P_c)>0$  for every primitive idempotent $c$ in $V$.
}
\end{theorem}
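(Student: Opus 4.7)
The plan is to exploit three ingredients: first, by Theorem~\ref{basic results} the inequality $v(M,e)\le 0$ is equivalent to the existence of a strategy $y\in\Delta(e)$ with $M^T y\le 0$; second, each quadratic representation $P_c$ is self-adjoint and lies in $\pi(K)$; and third, the Peirce formula \eqref{formula for La} makes the action of $P_c$ transparent on any element spectrally decomposed in a Jordan frame containing $c$. With these in hand, both directions reduce to short manipulations.

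\emph{Necessity.} Assume $L$ is completely mixed with $v(L)=0$, fix a primitive idempotent $c$, and suppose for contradiction that $v(L+P_c)\le 0$. Then some strategy $y$ satisfies $(L+P_c)^T y\le 0$, i.e., $L^T y\le -P_c(y)$. Since $y\ge 0$ and $P_c\in\pi(K)$, we have $-P_c(y)\le 0$, so $L^T y\le 0$. Theorem~\ref{basic results}(6) supplies $\overline{x},\overline{y}>0$ (after normalization, the unique optimal pair) with $L\overline{x}=0=L^T\overline{y}$ and $\dim\ker L=1$; by rank-nullity $\dim\ker L^T=1$ as well. Pairing $L^T y\le 0$ with $\overline{x}\in K^\circ$ and using $\langle L^T y,\overline{x}\rangle=\langle y,L\overline{x}\rangle=0$, self-duality forces $L^T y=0$, so $y\in\ker L^T=\R\overline{y}$; normalizing, $y=\overline{y}$. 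Reinserting yields $0\le -P_c(\overline{y})$, i.e., $P_c(\overline{y})\le 0$. But $P_c(\overline{y})\in K$ (since $P_c\in\pi(K)$) and $\langle P_c(\overline{y}),c\rangle=\langle\overline{y},P_c(c)\rangle=\langle\overline{y},c\rangle>0$ (because $\overline{y}\in K^\circ$ and $0\ne c\in K$), so $P_c(\overline{y})$ is a nonzero element of $K$; combined with $P_c(\overline{y})\le 0$, this contradicts pointedness of $K$.

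\emph{Sufficiency.} Assume $v(L+P_c)>0$ for every primitive idempotent $c$, and suppose for contradiction that $(L,e)$ is not completely mixed. By Theorem~\ref{basic results}(5) there is an optimal strategy pair $(\overline{x},\overline{y})$ with $\overline{y}\not>0$. Write $\overline{y}=\sum_{i=1}^{n}\overline{y}_i f_i$ spectrally, where $\{f_1,\ldots,f_n\}$ is a Jordan frame; since $\overline{y}\notin K^\circ$, at least one eigenvalue vanishes, and after relabeling we may assume $\overline{y}_n=0$. Set $c:=f_n$. Applying \eqref{formula for La} with $a=f_n$ in this frame gives $P_c(\overline{y})=\overline{y}_n f_n=0$. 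Since $\overline{y}$ is optimal for $(L,e)$, Theorem~\ref{basic results}(1) yields $L^T\overline{y}\le 0$, and therefore $(L+P_c)^T\overline{y}=L^T\overline{y}+P_c(\overline{y})\le 0$. This forces $v(L+P_c)\le 0$, contradicting the hypothesis.

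The only delicate point is the necessity direction, where one-dimensionality of $\ker L^T$ is needed to identify the witness $y$ with $\overline{y}$; sufficiency, by contrast, is essentially the algebraic remark that any zero spectral component of $\overline{y}$ furnishes exactly the primitive idempotent $c$ that makes $P_c(\overline{y})=0$.
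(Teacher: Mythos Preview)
Your proof is correct and follows essentially the same approach as the paper's: in both directions the key moves are that $P_c$ is self-adjoint and positive, that a witness $y$ with $(L+P_c)^T y\le 0$ forces $L^T y\le 0$, and that a zero eigenvalue of an optimal $\overline{y}$ yields a primitive idempotent $c$ with $P_c(\overline{y})=0$. The only cosmetic difference is that in the necessity direction you identify the witness $y$ with $\overline{y}$ via $\dim\ker L^T=1$, whereas the paper simply observes that $(y,\overline{x})$ is optimal and hence $y>0$ by complete mixedness; both routes lead to the same contradiction $\langle P_c(y),c\rangle=\langle y,c\rangle>0$ versus $P_c(y)=0$.
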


\begin{proof} It is given that $v(L)=0$. Take any primitive idempotent $c$ in $V$. As $P_c(K)\subseteq K$,
$\langle (L+P_c)x,y\rangle \geq \langle L(x),y\rangle$ for all strategies $x$ and $y$; hence, by the min-max theorem mentioned in the Introduction, 
$v(L+P_c)\geq v(L)=0.$

Now suppose $L$ is completely mixed. Then there exist optimal strategies $\overline{x}, \overline{y}>0$ such that $L^{T}(\overline{y})=0=L(\overline{x})$. Additionally, 
 for (arbitrary) strategies $z,w$,
$$L^{T}(z)\leq 0\leq L(w)\Rightarrow (z,w) \,\mbox{optimal}\Rightarrow z,w>0\Rightarrow  L^{T}(z)= 0= L(w).$$
Now, assume if possible, $v(L+P_c)=0$ for some $c$. Then, there is a  strategy $z$ such that 
$$(L+P_c)^T(z)\leq 0.$$ Since $P_c$ is self-adjoint and $P_c(K)\subseteq K$,  we have $L^T(z)\leq 0.$ As $ L(\overline{x})=0$, we now have 
$$L^{T}(z)\leq 0=L(\overline{x}).$$
 From the above implications, $z>0$ and $L^T(z)=0$. But then,
$P_c(z)\leq 0$ with $z>0$. Again, since $P_c(K)\subseteq K$, we must have $P_c(z)=0$. As $z>0$ and $0\neq c\in K$, we must have $\langle z,c\rangle >0$. However, $0<\langle z,c\rangle =\langle z, P_c(c)\rangle =\langle P_c(z),c\rangle =\langle 0,c\rangle =0.$ We reach a contradiction. Thus, the value of $L+P_c$ must be  positive.\\
To see the reverse implication, suppose the value of $L+P_c$ is positive for all primitive idempotents $c$. Assume that $L$ is not completely mixed. Then, there  are strategies $x$ and  $y$
such that $L^T(y)\leq 0\leq L(x)$, where some eigenvalue of $y$, say,  $y_1$ is zero. 
we write the spectral decomposition $y=y_1e_1+y_2e_2+\cdots+y_ne_n=0e_1+y_2e_2+\cdots+y_ne_n$ and let $c=e_1$ (which is a primitive idempotent).
Then, by (\ref{formula for La}), $P_c(y)=0$. As $P_c(x)\geq 0$, we have $(L+P_c)^{T}(y)\leq 0\leq (L+P_c)(x)$. This implies that the value of $L+P_c$ is zero, leading to a contradiction. Thus we have the reverse implication.
\end{proof}

\subsection{Irreducibility of a linear transformation on a symmetric cone}
Now consider a Euclidean Jordan algebra with its symmetric cone $K$ ($=\V_+$) and unit element $e$. In this setting, as noted in Proposition \ref{face of a symmetric cone}, every face of $K$ is of the form $\V_+(c,1)$ (the symmetric cone of $\V(c,1)$) for some idempotent $c$. Moreover,  $\V_+(c,1)=\{0\}$ or $\V_+$ if and only if $c=0$ or $e$. Since $\V(c,1)=\V_+(c,1)-\V_+(c,1)$, Definition \ref{irreducibility defn over a proper cone} reduces to the  following.

\begin{definition}\label{irreducibility defn over eja}
{\it Let $L$ be a linear transformation on a Euclidean Jordan algebra $\V$. Then, $L$ is  space-irreducible if 
$$L\big(\V(c,1)\big)\subseteq \V(c,1)\Rightarrow c=0\,\,\mbox{or}\,\,e.$$
 }
\end{definition}

So, space-irreducibility means that other than $\{0\}$ and $\V$, $L$ does not have any invariant subalgebras of the form $\V(c,1)$. 
We note that when $L$ is not space-irreducible, with $c\neq 0,e$, $W:=\V(c,1)$, and $L(W)\subseteq W$, we can write $\V=W^\perp+W$ and represent $L$ in the block form 
$$\left [ \begin{array}{cc}
L_1 & L_2\\0& L_3\end{array} \right ],$$
where $L_1:W^\perp\rightarrow W^\perp,\,L_2:W\rightarrow W^\perp$, and $L_3:W\rightarrow W$ are linear transformations.

\gap

The following two examples deal with the space-irreducibility of $L_A$ on $\Sn$ and $\Hn$.

\begin{example}\label{irreducibility of L_A on sn}
Let $n>1$. For any $A\in \R^{n\times n}$, consider the transformation $L_A$ on $\Sn$ defined by $L_A(X)=AX+XA^T$.
{\it  We claim that  $L_A$ is space-irreducible if and only if for every orthogonal matrix $U$, the matrix $U^TAU$ is irreducible.} We sketch a proof. Suppose $L_A$ is space-reducible, that is, there exists an idempotent 
$C\,(\neq 0, I)$ in $\Sn$ such that $L_A(\V(C,1))\subseteq \V(C,1)$. As $C$ is an idempotent, its eigenvalues are either $0$ or $1$; we can write $C=UPU^T$, where $U$ is orthogonal and $P$ is a diagonal matrix written in the block form as
$$P=\left [
\begin{array}{cc}
I_k & 0\\
0 & 0
\end{array}\right],$$
with $I_k$ denoting the identity matrix of size $k\times k$, $1\leq k<n$.  
Since $X\in \V(C,1)$ if and only if $C\circ X=X$, that is, $\frac{CX+XC}{2}=X$, we see that $\V(C,1)=U\V(P,1)U^T$. Moreover, the condition 
$L_A(\V(C,1))\subseteq \V(C,1)$ is equivalent to $L_B(\V(P,1))\subseteq \V(P,1)$, where $B=U^TAU$. Now, every element in $\V(P,1)$ has the same block form as $P$, except that in place of $I_k$, we have 
 a matrix from ${\cal S}^k$. So then $L_B(P)\in \V(P,1)$ implies that the southwest block of the matrix $B$ is zero, implying that 
$B$ is reducible. Now suppose there is an orthogonal $U$ such that the matrix $B=U^TAU$ is reducible.  By choosing an appropriate diagonal matrix $P$ and reversing the argument given above, one can construct an idempotent matrix $C\,(\neq 0,I)$ for which $L_A(\V(C,1))\subseteq \V(C,1)$ holds. Thus, we have shown that $L_A$ is space-reducible if and only if for some orthogonal matrix $U$, $U^TAU$ is reducible. This proves our claim. \\
Now, by the real version of Schur's upper triangularization theorem (\cite{horn-johnson}, Theorem 2.3.4), $A$ is orthogonally similar to a real block upper triangular matrix with each diagonal block either a $1\times 1$-matrix  or a $2\times 2$-matrix with a non-real pair of complex conjugate eigenvalues. So, if $L_A$ is space-irreducible (in which case, by our claim above, every matrix of the form $UAU^T$ is irreducible),  $A$ must be a $2\times 2$ matrix with a non-real pair of complex conjugate eigenvalues. Conversely, if $A$ is a $2\times 2$ matrix with this specific property, then every $U^TAU$ is irreducible; hence, $L_A$ is space-irreducible. In summary, we have proved:\\
{\it On $\Sn\,(n>1)$, $L_A$ is space-irreducible if and only if $n=2$ and $A$ is a real matrix with a non-real pair of complex conjugate eigenvalues.}\\
 To see a specific example,  consider the Lyapunov transformation $L_A$ on ${\cal S}^2$ defined by
$$L_A(X)=AX+XA^T\quad\mbox{where}\quad A=\left [ \begin{array}{rr} 0 &1\\ -1 &0\end{array}\right ].$$
As $A+A^T=0$, we see that  $L_A+(L_A)^T=0$. We also have $L_A(I)=0=(L_A)^T(I)$ and so, $v(L_A,I)=0$. Additionally,  if $X$ in ${\cal S}^2$ is positive semidefinite with $L_A(X)\geq 0$, then $X$ is a multiple of $I$. This shows that $L_A$ is completely mixed, hence irreducible.   It is easy to verify that this $L_A$ cannot be of the form $rI-S$ for any $S\in \pi({\cal S}^2_+).$ {\it Thus,  in contrast to the matrix game case (cf. Remarks made after Theorem \ref{z-matrix result}), we have an example of a Lyapunov-like transformation with value zero that is completely mixed and which is not a singular  $M$-transformation. }  
\end{example}

\begin{example}
For any $n\times n$ complex matrix $A$, $n>1$, consider $L_A$ on $\Hn$ defined by $L_A(X)=AX+XA^*$, where $A^*$ is the adjoint/conjugate of $A$. By Schur's upper-triangularization theorem (\cite{horn-johnson}, Theorem 2.3.1), we can write $A$ as
$UQU^*$, where $Q$ is an upper-triangular matrix and $U$ is unitary.  We note that the matrix $Q$ is reducible.  We can mimic the argument given in the previous example to show that $L_A$ keeps some nontrivial $\V(C,1)$
invariant. This means that {\it over $\Hn$ $(n>1)$, every $L_A$ is space-reducible.} By Theorem \ref{space-irreducibility equals cm}, no such transformation can (simultaneously) have value zero and be completely mixed.
Thus, because of Theorem \ref{GR- nonzero value implies cm}, {\it Over $\Hn$ $(n>1)$, $L_A$ is completely mixed if and only if its value is nonzero.}
\end{example}

\subsection{Irreducibility of a positive transformation on a symmetric cone}
The following theorem is a modified form of Proposition \ref{berman-plemmons result on K-irreducibility} stated for Euclidean Jordan algebras. We emphasize that the natural number $n$ appearing in Item $(iv)$ below is the rank (and not the dimension) of the algebra.
This result is motivated by a similar result in the setting of Hermitian matrices, see \cite{wolf}, Theorem 6.2 or \cite{idel}, Proposition C.1.

First, we state a lemma which is motivated by the result that when a (Hermitian) positive semidefinite matrix is written in a block form, if a diagonal block is zero, then the corresponding off-diagonal blocks are also zero.

\begin{lemma}\label{zero mixed term}
{\it For an idempotent $c$, consider the Peirce  orthogonal decomposition (\ref{peirce decomposition}). Suppose an element $z\in \V$ has no component in $\V(c,0)$, that is,  
$$ z=x+y,$$
where $x\in \V(c,1)$ and $y\in \V(c,\frac{1}{2})$. If $z\geq 0$, then, $y=0.$
}
\end{lemma}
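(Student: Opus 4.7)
The plan is to exploit the complementary idempotent $c' := e - c$ together with the characterization (\ref{zero jordan product}) that, for elements of $K$, orthogonality in inner product is equivalent to vanishing of the Jordan product.

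First I would note that $c'= e - c$ is an idempotent (since $c^2=c$), hence $c'\in K$. Moreover, $c'\circ c = c - c^2 = 0$, so $c'\in \V(c,0)$. The Peirce decomposition (\ref{peirce decomposition}) is orthogonal with respect to the trace inner product, so every element of $\V(c,0)$ is orthogonal to both $\V(c,1)$ and $\V(c,\frac{1}{2})$. Since $z = x+y$ with $x\in \V(c,1)$ and $y\in \V(c,\frac{1}{2})$, this yields
\[
\langle z, c'\rangle = \langle x, c'\rangle + \langle y, c'\rangle = 0.
\]

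Next I would apply the hypothesis $z \ge 0$. Together with $c'\in K$ and $\langle z, c'\rangle = 0$, the self-duality-style identity (\ref{zero jordan product}) gives
\[
z \circ c' = 0.
\]

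Finally I would compute $z\circ c'$ explicitly using the Peirce multiplication rules (Prop.\ IV.1.1 of \cite{faraut-koranyi}). Since $x\in \V(c,1)$ and $c'\in \V(c,0)$, and $\V(c,1)\circ \V(c,0)=\{0\}$, we obtain $x\circ c' = 0$. For $y$, using $y = y\circ e = y\circ c + y\circ c' = \tfrac{1}{2}y + y\circ c'$, we get $y\circ c' = \tfrac{1}{2}y$. Therefore
\[
0 = z\circ c' = x\circ c' + y\circ c' = \tfrac{1}{2}y,
\]
so $y = 0$, as required.

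There is no real obstacle here: the only subtlety is recognizing that (\ref{zero jordan product}) is exactly the bridge from the inner-product orthogonality (which follows for free from the orthogonality of the Peirce decomposition) to the Jordan-product relation that pins down the $\V(c,\frac{1}{2})$ component. Everything else is a short Peirce-calculus computation.
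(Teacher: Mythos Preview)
Your proof is correct and follows essentially the same approach as the paper: use the complementary idempotent $e-c\in\V(c,0)$, obtain $\langle z,e-c\rangle=0$ from the orthogonality of the Peirce decomposition, apply (\ref{zero jordan product}) to get $z\circ(e-c)=0$, and then a short Peirce computation forces $y=0$. The only cosmetic difference is that you compute $x\circ(e-c)$ and $y\circ(e-c)$ separately (invoking $\V(c,1)\circ\V(c,0)=\{0\}$), whereas the paper expands $(x+y)\circ(e-c)=(x+y)-(x+y)\circ c$ directly.
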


\begin{proof}
 Since $e-c\in \V(c,0)$, by the orthogonality of the spaces involved, we have $\langle x+y,e-c\rangle =0$. As $z=x+y\geq 0$ and $e-c\geq 0$, by (\ref{zero jordan product}), we have
 $$(x+y)\circ (e-c)=0.$$
This leads to $(x+y)\circ e-(x+y)\circ c=0$. Now, $x\in \V(c,1)\Rightarrow x\circ c=x$ and $y\in \V(c,\frac{1}{2})\Rightarrow y\circ c=\frac{1}{2} y$. Hence,  
$$(x+y)-(x+\frac{1}{2}y)=0.$$
This simplifies to  $y=0$.
\end{proof}

\gap

\begin{theorem}\label{K-irreducibility in eja}
{\it Suppose $\V$ is a Euclidean Jordan algebra  with unit $e$, $K=\V_+$, and $T\in \pi(K)$. Let $c$ denote an idempotent. Then the following are equivalent:
\begin{itemize}
\item [$(i)$]  $\langle T(c),e-c\rangle=0\Rightarrow c=0\,\, \mbox{or}~~ e$.
\item [$(ii)$] $T$ is space-irreducible, that is,  $T\big (\V(c,1)\big)\subseteq \V(c,1)\Rightarrow c=0\,\mbox{or}\,e$.
\item [$(iii)$] $T$ is $K$-irreducible.
\item [$(iv)$] For each $0\neq x\geq 0$, it holds that $(I+T)^{n-1}x>0$, where $n$ denotes the rank of $\V$.
\item [$(v)$] For each $0\neq x\geq 0$ and every $t>0$, $exp(tT)x>0$.

\end{itemize}
}
\end{theorem}

\begin{proof}
$(i)\Leftrightarrow (ii)$:  Suppose the negation of $(i)$ holds so that there is an idempotent $c\,(\neq  0,e)$ such that 
$\langle T(c),e-c\rangle=0$. Let $d:=e-c$ so that $d\neq 0,e$ and $\langle T(c),d\rangle =0$. Then, in the algebra $\V(c,1)$,  $c$ is the unit element and in $\V(c,0)$ (which is $\V(d,1)$), $d$ is the unit element. Now take primitive idempotents $c_1$
 in $\V(c,1)$ and $d_1$ in $\V(c,0)$. Since $0\leq c_1\leq c$ and $0\leq d_1\leq d$, we have, by the positivity of $T$,
 $0\leq \langle T(c_1),d_1\rangle\leq  \langle T(c),d\rangle =0$. Thus, $\langle T(c_1),d_1\rangle =0$. Since (from the spectral theorem) any element in $\V(c,1)$ is a linear combination of primitive idempotents in $\V(c,1)$ and any element in $\V(d,1)$ is a linear combination of primitive idempotents in $\V(d,1)$, we see that 
$$ T\big (\V(c,1)\big )\perp \V(c,0).$$
Now take any element $x\in \V_+(c,1)$ and consider the Peirce orthogonal decomposition of $T(x)$:
$$T(x)=u+v+w,\quad\mbox{where}\quad u\in \V(c,1), v\in \V(c,\frac{1}{2}), w\in \V(c,0).$$
Since $T(x)\perp w$, we must have $w=0$. So now, $0\leq T(x)=u+v$. By Lemma \ref{zero mixed term}, we have $v=0$. Thus, for any $x\in \V_+(c,1)$, $T(x)\in \V(c,1)$. Since any element in $\V(c,1)$ is a difference of two elements in $\V_+(c,1)$, we see that 
$T\big (\V(c,1)\big)\subseteq \V(c,1)$. Hence, the negation of $(ii)$ holds. 
\\
Now suppose the negation of $(ii)$ holds so that there exist $c \,(\neq 0,e)$ with $T(c)\in T\big (\V(c,1)\big)\subseteq \V(c,1)$.  Then, with $0\neq d:=e-c$, we have $ \V(c,1)\perp \V(d,1)$, hence $\langle T(c),d\rangle=0$. This gives the negation of $(i)$. 
Hence, $(i)$ and $(ii)$ are equivalent.\\
$(ii)\Leftrightarrow (iii)$: This follows from the facts that for $T\in \pi(K)$, $T(\V(c,1))\subseteq \V(c,1)$ if and only if $T(\V_+(c,1))\subseteq \V_+(c,1),$ and that every face of (the symmetric cone) $K$ is of the form of  $\V_+(c,1)$ for some idempotent $c$.\\
$(ii)\Rightarrow (iv)$: Assume that $(ii)$ holds. 
Let $0\neq x\geq 0$. If $x>0$, $(iv)$ trivially holds as $T\in \pi(K)$. So, suppose $rank\,(x)=k$, where $1\leq k<n$.  
We write the spectral decomposition $x=x_1e_1+x_2e_2+\cdots+x_ke_k+0e_{k+1}+\cdots+0e_n$, where $\{e_1,e_2,\ldots, e_n\}$ is a Jordan frame and $x_i>0$ for all $i=1,2,\ldots, k$. Let 
$c:=e_1+e_2+\cdots+e_k$. Then, in the (sub)algebra $\V(c,1)$, $x>0$. We show that $T(x)\not\in \V(c,1)$. Assume the contrary, i.e., $T(x)\in \V(c,1)$; then, as $T\in \pi(K)$, $T(x)\in \V_+(c,1)$.
{\it We now claim that  $T(\V(c,1))\subseteq \V(c,1)$}.
To see this,  take any $y\in \V(c,1)$. Then,
for small positive $\varepsilon$, $x\pm \varepsilon y>0$ in $\V(c,1)$. As $T\in \pi(K)$ and $\V_+(c,1)\subseteq K$, we have $T(x\pm \varepsilon y)\geq 0$. Since $T(x)$ (which is in $\V_+(c,1)$) is now 
the average of two elements (namely, $T(x+\varepsilon y)$ and $T(x-\varepsilon y)$) in $K$, these two elements belong to the face $\V_+(c,1)$; consequently, their difference $2\varepsilon T(y)$ (equivalently, $T(y)$) belongs to $\V(c,1)$. This proves the claim. However, this contradicts $(ii)$ and so we must have  $T(x)\not \in \V(c,1)$. Since $x\in \V(c,1)$, this implies that 
$$z:=x+T(x)=(I+T)x\not\in \V(c,1).$$   
We now claim that 
$$rank\,z>\,rank\,x=k.$$ 
Suppose, if possible, $l:=rank\,z\leq k$. Then, we write the spectral decomposition of $z$ as $z=z_1c_1+z_2c_2+\cdots+z_lc_l+0c_{l+1}+\cdots+0c_n$, where $\{c_1,c_2,\ldots, c_n\}$ is a Jordan frame and $z_i\neq 0$ for $1\leq i\leq l$.  Noting that $l\leq k$, we consider the idempotent $d:=c_{k+1}+c_{k+2}+\cdots+c_n$. Clearly,  $\langle z,d\rangle =0$. This implies (recall $x,T(x)\geq 0$)  that $\langle x,d\rangle=0=\langle T(x),d\rangle.$ From $0=\langle x,d\rangle=\sum_{i=1}^{k}x_i\langle e_i,d\rangle$ with $x_i>0$ for all $i$ appearing in the sum, we see that $\langle e_i,d\rangle =0$ for $i=1,2\ldots, k$. It follows that $\{e_1,e_2,\ldots,e_k\}\perp \{c_{k+1},c_{k+2},\ldots,c_n\}$.  As the rank of $\V$ is $n$, 
$\{e_1,e_2,\ldots,e_k, c_{k+1},c_{k+2},\ldots,c_n\}$ forms a Jordan frame; in particular, $d=e-c$. From $\langle T(x),d\rangle =0$ and (\ref{zero jordan product}),  we have, 
 $T(x)\circ d=0$.  Consequently, as $d=e-c$, we have $$ T(x)\in \V(d,0)=\V(c,1).$$
We reach a contradiction to the earlier assertion that $T(x)\not \in \V(c,1)$. This proves  that 
$$0\neq x\in K,\,\mbox{rank}\,x<n\Rightarrow \mbox{rank}\,(I+T)x>\mbox{rank}\,x.$$
So, starting with an element $0\neq x\in K$ and applying $I+T$ repeatedly,  we reach an element in  $K$ with rank $n$ in at most $n-1$ steps. Thus,  $(iv)$ holds.\\
$(iv)\Rightarrow (v)$: Suppose $(iv)$ holds, but not $(v)$. Then, there exist  $0\neq x\geq 0$  and a positive number $t$ such that $exp(tT)x\not >0$. (We note that $exp(tT)\in \pi(K)$.) 
Then, for some (primitive) idempotent $c$, $\langle exp(tT)x,c\rangle =0$. Expanding $exp(tT)$ and using the positivity of $T$ and its powers, we see that $\langle T^m(x),c\rangle =0$ for all $m=0,1,2,\ldots$ Hence, $\langle (I+T)^{n-1}x,c\rangle =0$. This violates $(iv)$. \\
$(v)\Rightarrow (ii)$: Suppose $(v)$ holds but not $(ii)$. Then, 
for some idempotent $c\,(\neq 0,e)$, we have $T\big (\V(c,1)\big )\subseteq \V(c,1)$. As $c\in \V(c,1)$, we have $T^m(c)\in \V(c,1)$ for all $m=0,1,2,\ldots$ Thus, 
$exp(T)c\in \V(c,1)$. In particular, $exp(T)c\not >0$, contradicting $(v)$ for $x=c$ and $t=1$.
\\
In summary, we have proved the implications $(i)\Leftrightarrow (ii)\Leftrightarrow (iii)\Rightarrow (iv)\Rightarrow (v)\Rightarrow (ii)$. Thus, all the statements in the theorem are equivalent.
This completes the proof.
\end{proof}

\subsection{The completely mixed property of symmetric/skew-symmetric Lyapunov-like transformations}
In the next two results, we deal with the completely mixed property of a symmetric/skew-symmetric Lyapunov-like transformation. Recall that 
every symmetric Lyapunov-like transformation is of the form $L_a$ for some $a\in \V$ and a skew-symmetric Lyapunov-like transformation is a derivation, see \cite{tao-gowda-representation}.

\begin{theorem} 
{\it Suppose $a\in \V$. Then the following statements are equivalent:
\begin{itemize}
\item [$(i)$] $L_a$ is completely mixed.
\item [$(ii)$] $a>0$ or $a<0$.
\item [$(iii)$] $v(L_a)\neq 0$.
\end{itemize}
}
\end{theorem}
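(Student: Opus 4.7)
The plan is to prove the cyclic chain $(ii)\Rightarrow(iii)\Rightarrow(i)\Rightarrow(ii)$, resting on two elementary facts about the Lyapunov operator: first, $L_a$ is self-adjoint for the trace inner product, since $\langle a\circ x,y\rangle=\langle a,x\circ y\rangle=\langle x,a\circ y\rangle$; second, whenever $u\in K^\circ$, the operator $L_u$ is invertible with $L_u^{-1}(e)=u^{-1}$, because in a Jordan frame diagonalising $u=\sum u_i f_i$ the eigenvalues of $L_u$ are $(u_i+u_j)/2>0$ by (\ref{formula for La}), and $u\circ u^{-1}=e$.

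For $(ii)\Rightarrow(iii)$, if $a>0$ then $L_a(e)=a\circ e=a>0$, so Item $(3)$ of Theorem \ref{basic results} yields $v(L_a,e)>0$; if $a<0$, apply this to $-a$ and combine Item $(2)$ of Theorem \ref{basic results} with $L_a=L_a^T$ (giving $v(L_{-a},e)=-v(L_a,e)$) to conclude $v(L_a,e)<0$. For $(iii)\Rightarrow(i)$, the representation (\ref{LL representation in EJA}) shows $L_a\in\LL(K)$, so the Lyapunov-like clause of the recalled theorem on completely mixed instances applies at once.

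The substantive step is $(i)\Rightarrow(ii)$. Set $v:=v(L_a)$ and let $(\bar x,\bar y)\in K^\circ\times K^\circ$ be the unique optimal pair guaranteed by the completely mixed hypothesis. Item $(1)$ of Theorem \ref{basic results}, together with $L_a=L_a^T$, forces $L_a(\bar x)=ve$, equivalently $L_{\bar x}(a)=ve$ by commutativity of $\circ$. Inverting $L_{\bar x}$ using the second fact above gives $a=v\,\bar x^{-1}$; since $\bar x^{-1}\in K^\circ$, we obtain $a>0$ when $v>0$ and $a<0$ when $v<0$. The residual case $v=0$ would force $a=0$ and hence $L_a\equiv 0$ with $\ker L_a=\V$, contradicting $\dim\ker L_a=1$ from Item $(6)$ of Theorem \ref{basic results} (the trivial rank-one algebra $\V=\R$ being vacuous).

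The principal obstacle I anticipate is cleanly closing the $v=0$ subcase of $(i)\Rightarrow(ii)$: one must invoke Item $(6)$ to derive a contradiction from $L_a=0$, which in turn rests on the completely mixed value-zero kernel condition. Once the positive-definiteness of $L_{\bar x}$ on $K^\circ$ is in hand, the nonzero-value case and the sign assignment reduce to a one-line computation.
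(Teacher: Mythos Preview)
Your proof is correct, and your route differs from the paper's in two places, both to your advantage in brevity.

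For $(ii)\Rightarrow(iii)$, the paper argues by contradiction: assuming $a>0$ and $v(L_a)\le 0$, it takes a strategy $y$ with $L_a^T(y)\le 0$ and derives $\langle y,a^2\rangle\le 0$, forcing $y=0$. Your observation that $L_a(e)=a>0$ feeds Item~$(3)$ of Theorem~\ref{basic results} in one line and is cleaner.

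For $(i)\Rightarrow(ii)$, the paper works with the spectral decomposition $a=\sum a_ie_i$ and shows by hand that if some $a_i=0$ or if the $a_i$ have mixed signs, one can exhibit boundary optimal strategies (respectively $e_1$, or $c/k$ and $(e-c)/(n-k)$). Your argument instead extracts the identity $a=v\,\overline{x}^{-1}$ from $L_{\overline{x}}(a)=ve$ and the invertibility of $L_{\overline{x}}$ for $\overline{x}>0$; this is a slick algebraic shortcut that immediately pins down both the sign of $a$ and the exclusion of $v=0$ (via $\dim\ker L_a=1$). The paper's approach has the minor virtue of being constructive—it names the offending non-interior strategies—while yours is shorter and exploits the Jordan-algebraic structure more directly.

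One cosmetic remark: your invocation of self-adjointness in deriving $L_a(\overline{x})=ve$ is unnecessary, since Item~$(1)$ of Theorem~\ref{basic results} already gives $L(\overline{x})=ve$ once an interior optimal pair exists. And ``vacuous'' is not quite the right word for the rank-one case—the equivalence genuinely fails there for $a=0$—but the paper itself simply assumes $\dim\V>1$ without further comment, so you are in good company.
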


\begin{proof} Without loss of generality, we assume that the dimension of $\V$ is more than one.\\
$(i)\Rightarrow (ii)$: Suppose $L_a$  is completely mixed. Then, there exists $\overline{x}>0$ with $L_a(\overline{x})=0$ and $ker(L_a)=\{t\overline{x}:\,t\in \R\}.$ Consider  the spectral decomposition $a=a_1e_1+a_2e_2+\cdots+a_ne_n$. If possible, let $a_1=0$. Then, by (\ref{formula for La}),
$$L_a^T(e_1)=L_a(e_1)=0.$$
As  $e_1\not >0$, we reach a contradiction to $(i)$. Hence, $a_i\neq 0$ for all $i$. Now suppose, if possible, there is an index $k$ such that $1\leq k<n$ such that
$a_i>0$ for $1\leq i\leq k$ and $a_i<0$ for $k+1\leq i\leq n$. Let $c:=e_1+e_2+\cdots+e_k$. Then, by (\ref{formula for La}),  $L_a(c)=\sum_{i=1}^{k}a_ie_i\geq 0$ and 
$L_a^{T}(e-c)=\sum_{i=k+1}^{n}a_ie_i\leq 0.$ We see that
$$L_a^{T}\Big (\frac{e-c}{n-k}\Big )\leq 0\leq L_a\Big (\frac{c}{k}\Big ).$$
As $\frac{c}{k}\not >0$, we reach  a contradiction to $(i)$. Hence, $a>0$ or $a<0$. 
\\
$(ii)\Rightarrow (iii)$: We  show that $a>0\Rightarrow v(L_a)>0.$ Similarly $a<0\Rightarrow v(L_a)<0$. \\
Given $a>0$, suppose $v(L_a)\leq 0$. Then, there exists $y\geq 0$ with $\langle y,e\rangle=1$ (in particular, $y\neq 0$) such that $L_a^T(y)\leq v(L_a)\,e\leq 0$. As $a>0$, we have 
$\langle L_a^{T}(y),a\rangle \leq 0$, that is, $\langle y, L_a(a)\rangle \leq 0$. Since $y\geq 0$ and $L_a(a)=a^2>0$, we must have $y=0$, yielding a contradiction.
\\
$(iii)\Rightarrow (i)$: As $L_a$ is Lyapunov-like, this is a special case of Theorem 7 in \cite{gowda-value}.
\end{proof} 

\gap

Suppose the linear transformation $L$ is skew-symmetric, that is, $L+L^T=0$. In this case, the game $(L,e)$ is called a {\it symmetric game} \cite{gokulraj-chandrashekaran-1}. For such an $L$, the value is zero (see
Theorem \ref{basic results}, Item (2)). It is easy to see that a skew-symmetric $Z$-transformation is Lyapunov-like.

\sgap

\begin{theorem}  \label{cm for derivation}
{\it Let $D$ be a skew-symmetric $Z$-transformation (=derivation) on a Euclidean Jordan algebra $\V$. Then, the following are equivalent:
\begin{itemize}
\item [(i)] The game $(D,e)$ is completely mixed.
\item [(ii)] $D$  is space-irreducible.
\item [(iii)] $ker(D)$ is one dimensional. 
\end{itemize}
}\end{theorem}

\sgap

\begin{proof}
Since  $D$ is skew-symmetric, its value is zero. As $D$ is a $Z$-transformation,  by Theorem \ref{space-irreducibility equals cm}, $(i)$ and $(ii)$ are equivalent.\\ 
$(i)\Rightarrow (iii)$: This follows from Theorem \ref{basic results}, Item (6).\\
$(iii)\Rightarrow (i)$: As $D$ is a derivation, $D(e)=0$, see \cite{tao-gowda-representation}, Proposition 3. Furthermore,  as $D$ is skew-symmetric, $D^T(e)=0$. Now suppose $(\overline{x},\overline{y})$ is any optimal strategy pair for the game $(D,e)$ so that  $D^T(\overline{y})\leq 0\leq D(\overline{x})$. Since $(e,e)$ is also an optimal strategy pair and $e>0$, we have 
 $D^T(\overline{y})= 0= D(\overline{x})$, see Theorem \ref{basic results}, Item (1).
 So, when $(iii)$ holds, $\overline{x}$ is a positive multiple of $e$; thus, $\overline{x}>0$. This proves $(i)$.
\end{proof}
 
We refer to Example \ref{irreducibility of L_A on sn} for an illustration of the above theorem.

\section{Concluding remarks}
In the setting of a self-dual cone in a finite dimensional real inner product space,  we showed that a $Z$-transformation with value zero is completely mixed if and only if it is  space-irreducible. We also gave a finer characterization of  cone-irreducibility for a positive transformation on 
a Euclidean Jordan algebra and generalized a recent result of Parthasarathy et al. to the setting of a symmetric cone. As mentioned in the Introduction, one can look for the properties of $Z$-transformations with value zero in regard to dynamical systems and complementarity theory. These are left for future study.

\gap

\noindent{\bf Acknowledgments:} We wish to thank   G. Ravindran, Indian Statistical Institute, Chennai, for discussions on Theorem \ref{z-matrix result}, especially for providing reference \cite{li}. Special thanks are due to M. Orlitzky \cite{orlitzky-seminar} for pointing out the equivalence between our space-irreducibility concept and the irreducibility concept of Elsner. That equivalence and the reference   \cite{elsner} enabled us to substantially improve the results presented in \cite{gowda-cm-arxiv}. 


\begin{thebibliography}{}
\bibitem{berman-plemmons} A. Berman and R.J. Plemmons, \emph{Nonnegative matrices in the Mathematical Sciences}, SIAM, Philadelphia, 1994.

\bibitem{elsner} L. Elsner, \emph{Quasimonotonie und Ungleichungen in halbgeordneten R\"{a}men}, Linear Alg. Appl.,
8 (1974) 249-261.
 
\bibitem{encinas et al} A.M. Encinas, S. Mondal, and K.C. Sivakumar, \emph{On an Analogue of a Property of Singular M-matrices, for the
Lyapunov and the Stein Operators}, Linear and Multilinear
Algebra, DOI: 10.1080/03081087.2024.2313633, 12  March 2024.

\bibitem{faraut-koranyi} 
J. Faraut and A. Kor\'{a}nyi, \emph{Analysis on Symmetric Cones}, Oxford University Press, Oxford, 1994.

\bibitem{gokulraj-chandrashekaran-1} S. Gokulraj and A. Chandrashekaran, \emph{On symmetric linear games}, Linear Algebra Appl., 562 (2019) 44-54. 

\bibitem{gokulraj-chandrashekaran-2} S. Gokulraj and A. Chandrashekaran, \emph{Linear complementarity problems and bilinear games}, Applications Mathematics, 65 (2020) 665–675.

\bibitem{gowda-value} M.S. Gowda, \emph{Completely mixed linear games on a self-dual cone}, Linear Algebra  Appl., 498 (2016) 219-230.

\bibitem{gowda-cm-arxiv} M.S. Gowda, \emph{Completely mixed linear games for $Z$-transformations over self-dual cones}, arXiv:2310.13464v1, 20 October 2023.

\bibitem{gowda-ravindran} M.S. Gowda and G. Ravindran, \emph{On the game-theoretic value of a linear transformation relative to a self-dual cone}, Linear Algebra Appl., 469 (2015) 440-463.

\bibitem{gowda-sznajder} M.S. Gowda and R. Sznajder, \emph{Automorphism invariance of linear transformations on Euclidean Jordan algebras},
 Math. Oper. Res., 31 (2006) 109–123.

\bibitem{gowda-sznajder-tao} M.S. Gowda, R. Sznajder, and J. Tao, \emph{Some 
P-properties for linear transformations on Euclidean Jordan algebras,} Linear Algebra Appl., 393 (2004) 203-232.


\bibitem{gowda-tao-z} M.S. Gowda and J. Tao, \emph{Z-transformations on proper and symmetric cones}, Math. Prog., Series B, 117 (2009) 195-222.

\bibitem{gowda-tao-ravindran} M.S. Gowda, J. Tao, and G. Ravindran, \emph{On the $P$-property of $Z$ and Lyapunov-like transformations
on Euclidean Jordan algebras}, Linear Algebra Appl., 436 (2012) 2201-2209.

\bibitem{horn-johnson} R.A. Horn and C.R. Johnson, \emph{Matrix Analysis}, Cambridge University Press, Cambridge, 1985.

\bibitem{idel} M. Idel, \emph{A review of matrix scaling and Sinkhorn’s normal form for matrices and positive maps}, arXiv:1609.06349, Sept. 2016.

\bibitem{kaplansky-1} I. Kaplansky, \emph{A contribution to von Neumann's theory of  games}, Ann. of Math., 46 (1945) 474-479. 

\bibitem{kaplansky-2} I. Kaplansky, \emph{A contribution to von Neumann's theory of  games. II}, Linear Algebra Appl., 226-228 (1995) 371-372.

\bibitem{karlin} S. Karlin,  \emph{Mathematical Methods and Theory in Games, Programming, and Economics}, Addison
Wesley Publishing Company, Reading, MA, 1959.

\bibitem{li} W. Li, \emph{Characterizations of singular irreducible M-matrices}, Linear and Multilinear Alg., 38 (1995) 241-247.


\bibitem{orlitzky} M. Orlitzky, \emph{Positive Operators, Z-Operators, Lyapunov Rank, and Linear Games on Closed Convex Cones}, 
ProQuest LLC, Ann Arbor, MI, thesis (Ph.D.) University of Maryland, Baltimore County (2017).

\bibitem{orlitzky-seminar} M. Orlitzky, \emph{On Elsner's irreducibility concept}, UMBC Optimization Seminar, March 28, 2024.

\bibitem{parthasarathy et al} T. Parthasarathy, G. Ravindran, and Sunil Kumar, \emph{On completely mixed games}, 
Jour.  Optimization Theory and Appl., 
DOI:10.1007/s10957-024-02395-5, Feb 2024.


\bibitem{raghavan} T.E.S. Raghavan, \emph{Completely mixed games and $M$-matrices}, Linear Algebra Appl., 21 (1978) 35-45.

\bibitem{robert} P. Robert, \emph{On the group inverse of a linear transformation}, J. Math. Anal.
Appl., 22 (1968), 658-669.

\bibitem{schneider-vidyasagar} H. Schneider and M. Vidyasagar, \emph{Cross-positive matrices,} SIAM J. Num. Anal., 7 (1970) 508-519. 



\bibitem{tao-gowda-representation} J. Tao and M.S. Gowda, \emph{A representation theorem for Lyapunov-like transformations on Euclidean Jordan algebras}, Inter. Game Theory Rev., 13 (2013) 1340034*1-11.

\bibitem{vandergraft} G. Vandergraft, \emph{Spectral properties of matrices having invariant cones}, SIAM J. Appl. Math., 16 (1968) 1208-1222.

\bibitem{wolf} M.M. Wolf, \emph{Quantum Channels and Operations, Guided Tour}, lecture notes, 2012.
\end{thebibliography}
\end{document}